\documentclass[11pt,reqno]{amsart}
\oddsidemargin = 0cm \evensidemargin = 0cm \textwidth = 16cm

 \usepackage{amssymb,amsfonts,amscd,amsbsy, color}
\usepackage[mathscr]{eucal}
\usepackage{url}

\newtheorem{theorem}{Theorem}[section]
\newtheorem{lemma}[theorem]{Lemma}

\newtheorem{corollary}[theorem]{Corollary}
\theoremstyle{definition}

\numberwithin{equation}{section}

\newcommand{\idem}{\textup{idem}}
\newcommand{\sgn}{\operatorname{sgn}}

\makeatletter
\def\imod#1{\allowbreak\mkern5mu({\operator@font mod}\,\,#1)}
\makeatother
\allowdisplaybreaks

\begin{document}

\title[On recursions for coefficients of mock theta functions]
{On recursions for coefficients of mock theta functions}

\author{Song Heng Chan}

\author{Renrong Mao}

\author{Robert Osburn}

\address{Division of Mathematical Sciences, School of Physical and Mathematical Sciences, Nanyang Technological University, 21 Nanyang link, Singapore, 637371, Republic of Singapore}

\address{School of Mathematical Sciences, Soochow University, SuZhou 215006, PR China}

\address{School of Mathematical Sciences, University College Dublin, Belfield, Dublin 4, Ireland}

\address{IH{\'E}S, Le Bois-Marie, 35, route de Chartres, F-91440 Bures-sur-Yvette, FRANCE}

\email{chansh@ntu.edu.sg}

\email{rrmao@suda.edu.cn}

\email{robert.osburn@ucd.ie, osburn@ihes.fr}

\subjclass[2010]{Primary: 33D15; Secondary: 11F30}
\keywords{Lambert series, mock theta functions, $q$-series identities}

\date{\today}

\begin{abstract}
We use a generalized Lambert series identity due to the first author to present $q$-series proofs of recent results of Imamo{\u g}lu, Raum and Richter
concerning recursive formulas for the coefficients of two 3rd order mock theta functions. Additionally, we discuss an application of this identity to other mock theta functions.
\end{abstract}

\maketitle

\section{Introduction}

In \cite{chan1}, the first author proved the following generalized Lambert series identity:

\begin{align}  \nonumber
\frac{[a_1, \ldots, a_r]_\infty (q)_\infty^2}{[b_1, \ldots, b_s]_\infty} &=  \frac{[a_1/b_1, \ldots, a_r/b_1]_\infty}{[b_2/b_1,\ldots, b_s/b_1 ]_\infty} \sum_{k=-\infty}^{\infty}
\frac{(-1)^{(s-r)k}q^{(s-r)k(k+1)/2}}{1-b_1q^k} \left(
\frac{a_1\cdots a_r b_1^{s-r-1} }{b_2 \cdots b_s} \right)^k
\\
&\quad+ \idem(b_1; b_2, \ldots, b_s), \label{gls}
\end{align}

\noindent valid for nonnegative integers $r < s$. Here and throughout, we use the following standard $q$-hypergeometric notation

\begin{align}
(a)_{n} & := (a;q)_{n} := \prod_{k=0}^{n} (1-aq^{k-1}) \nonumber \\
(a_1, \ldots, a_m)_{n} & := (a_1, \ldots, a_m; q)_{n} := (a_1)_{n} \cdots (a_m)_{n} \nonumber \\
[a_1, \ldots, a_m]_{n} & := [a_1, \ldots, a_m; q]_{n} = (a_1, q/a_1, \ldots, a_m, q/a_m)_{n} \nonumber
\end{align}

\noindent valid for $n \in \mathbb{N} \cup \{\infty\}$ and $F(a_1, a_2, \ldots, a_m) + \idem(a_1; a_2, \ldots, a_n)$ to denote the sum

\begin{equation*}
\sum_{i=1}^{n} F(a_i, a_2, \ldots, a_{i-1}, a_1, a_{i+1}, \ldots, a_m)
\end{equation*}

\noindent where the $i$th term of the sum is obtained from the first by interchanging $a_1$ and $a_i$.

Identity (\ref{gls}) is of interest for several reasons. For example, it generalizes a key identity used by Atkin and Swinnerton-Dyer \cite{asd} in their proof of Dyson's conjectures on the rank of a partition. Also, (\ref{gls}) played a crucial role in the construction of quasimock theta functions \cite{blo} and rank-crank PDE's \cite{cdg},
in proving congruences for the mock theta function $\varphi(q)$ \cite{chan2}, Appell-Lerch sums \cite{chanmao1}, spt-type functions \cite{cjs1} and partition pairs \cite{cjs2} and in obtaining identities for generating functions  of other types of partition pairs \cite{chanmaopp} and various rank differences \cite{lo1}--\cite{mao10}.

Recently, Imamo{\u g}lu, Raum and Richter \cite{irr} proved some intriguing results concerning recursive formulas for the coefficients of the 3rd order mock theta functions

$$
f(q) := \sum_{n=0}^{\infty} \frac{q^{n^2}}{(-q)_{n}^2} = \sum_{n=0}^{\infty} c(f; n) q^n
$$

\noindent and

$$
\omega(q) := \sum_{n=0}^{\infty} \frac{q^{2n(n+1)}}{(q;q^2)_{n+1}^2} = \sum_{n=0}^{\infty} c(\omega; n) q^n.
$$

\noindent Namely, if $\sigma(n):=\sum_{0< d|n} d$, $\sgn^+(n):=\sgn(n)$ for $n\neq 0$ and $\sgn^+(0):=1$ and
\[
d(N,\tilde{N}, t, \tilde{t}) := \sgn^+ (N) \sgn^+(\tilde{N})(|N+t| - |\tilde{N}+\tilde{t}|),
\]

\noindent then we have the following (see Theorems 1 and 9 in \cite{irr}, slightly rewritten).

\begin{theorem} \label{t1}
For a fixed $n \in \mathbb{Z}^+$ and for any $a,b\in \mathbb{Z}$, set $N:= \frac{1}{6}(-3a+b-1)$ and $\tilde{N}:=\frac{1}{6}(3a+b-1)$. Then

\begin{equation} \label{t1id}
\sum_{\substack{m\in \mathbb{Z}\\3m^2+m\leq 2n}}
\left(m+\frac{1}{6}\right) c\left(f; n-\frac{3}{2}m^2-\frac{1}{2}m\right) =
\frac{4}{3} \sigma(n) -\frac{16}{3}\sigma\left(\frac{n}{2}\right)
-2 \sum_{\substack{a,b\in \mathbb{Z}\\ab=2n\\6|3a+b-1}} d\left(N, \tilde{N}, \frac{1}{6}, \frac{1}{6}\right).
\end{equation}

\noindent For a fixed $n\in \mathbb{Z}^+$ and for any  $a, b\in \mathbb{Z}$, set $N:=\frac{1}{12}(3a-b-2)$ and $\tilde{N}:=\frac{1}{12}(3a+b-4)$. Then

\begin{equation} \label{t9}
\sum_{\substack{m\in \mathbb{Z}\\3m^2+2m\leq n}}
\left(m+\frac{1}{3}\right) c\left(f; \frac{n}{2}-\frac{3}{2}m^2-m\right) =
-2 \sum_{\substack{a,b\in \mathbb{Z}\\ab=4n+1\\ 12|3a-b-2}} d\left(N, \tilde{N}, \frac{1}{6}, \frac{1}{3}\right).
\end{equation}

\end{theorem}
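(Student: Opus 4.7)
The plan is to derive both recursions by specializing the master identity (\ref{gls}) with $s - r = 3$, so that the exponents $q^{(s-r)k(k+1)/2} = q^{3k(k+1)/2}$ on the right match the pentagonal-type exponents $q^{m(3m+1)/2}$ appearing on the left of (\ref{t1id}) and (\ref{t9}). I would first rewrite the left sides of these identities as the coefficients of $q^n$ and $q^{n/2}$ respectively in the products $f(q)\,T_{1/6}(q)$ and $f(q)\,T_{1/3}(q)$, where $T_c(q) := \sum_{m \in \mathbb{Z}} (m+c)\, q^{m(3m+1)/2}$. Since $T_c$ is (up to a power of $q$) the image of the Jacobi triple product $(b_1, q/b_1, q;q)_\infty$ under $b_1\partial_{b_1}$ evaluated at $b_1 = q^c$, the factor $1 - b_1 q^k$ appearing in (\ref{gls}) is exactly what is needed to manufacture the shifts $c = 1/6$ and $c = 1/3$ on the left-hand side.

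Concretely, I would take $r = 0$, $s = 3$, and choose $b_1, b_2, b_3$ as monomials in $q$ times sixth (respectively third) roots of unity, arranged so that the left side of (\ref{gls}), namely $(q)_\infty^2/[b_1, b_2, b_3]_\infty$, collapses via Jacobi's triple product and the Hickerson--Andrews expansion of $f(q)$ into $f(q)\,T_{1/6}(q)$ (resp.\ $f(q)\,T_{1/3}(q)$) up to an explicit rational prefactor. The problem then reduces to showing that the right side of the specialized (\ref{gls}) matches the right side of (\ref{t1id}) (resp.\ (\ref{t9})). For this I would expand each $(1 - b_1 q^k)^{-1}$ as a geometric series, collect the resulting double sum over $(k,\ell)$ into a Hecke-type indefinite theta sum, and split it into a diagonal part, arising from the pole at $b_1 = 1$ and regulated by the $\idem$ terms via a residue/limit computation in the spirit of \cite{chan2, chanmao1}, and an off-diagonal remainder. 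The diagonal part produces the Lambert series $\sum_{k\geq 1} k q^k/(1-q^k)$, which accounts for the term $\tfrac{4}{3}\sigma(n) - \tfrac{16}{3}\sigma(n/2)$ (the second summand coming from a parity-shifted pole), while the off-diagonal remainder, after the change of variables $(m,\ell) \leftrightarrow (a,b)$ dictated by the quadratic form $3m^2 + m$, rearranges into the signed sum $-2\sum_{ab = 2n,\,6\mid 3a+b-1} d(N, \tilde{N}, \tfrac{1}{6}, \tfrac{1}{6})$.

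The main obstacle, and where almost all of the effort lies, is locating the correct $(b_1, b_2, b_3)$ so that the left side of the specialized (\ref{gls}) becomes $f(q)\,T_c(q)$ (or a scalar multiple thereof). Once that specialization is in hand, the remainder of the proof is careful bookkeeping: isolate the residues at the roots of unity, expand the Lambert sums, and rewrite the resulting indefinite theta sum in the $(a,b)$-coordinates natural to \cite{irr}. Identity (\ref{t9}) then follows by the same procedure after replacing $c = 1/6$ by $c = 1/3$ and absorbing a substitution $q \mapsto q^{1/2}$, which is what is responsible for the congruence $12 \mid 3a - b - 2$ appearing there in place of $6 \mid 3a + b - 1$.
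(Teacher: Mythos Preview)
Your plan has a genuine gap at the very first step. The left side of (\ref{gls}) with $r=0$, $s=3$ is the pure infinite product $(q)_\infty^2/[b_1,b_2,b_3]_\infty$; no choice of $b_1,b_2,b_3$ (at roots of unity or otherwise) can make this equal to $f(q)\,T_c(q)$, because an infinite product of this shape is a weakly holomorphic modular form while $f(q)$ is genuinely mock. In the paper's argument $f(q)$ enters from the \emph{other} side of (\ref{gls}), through the Appell--Lerch sum $\sum_k (-1)^k q^{k(3k+1)/2}/(1-b_1q^k)$. Concretely, one first sets $(b_1,b_2,b_3)=(x,-xq/a,-ax)$, differentiates in the auxiliary parameter $a$ and sends $a\to 1$ to obtain an identity (Lemma~2.1, equation (\ref{lat2})) whose left side is an eta-quotient times $\sum_n (-1)^n q^{n(3n+1)/2}/(1-xq^n)$; then a \emph{second} differentiation step (multiply by $(1+x)(1+1/x)$, apply $\partial_x^2$, set $x=-1$) converts that Appell--Lerch sum into $f(q)$ via (\ref{mockf}), yielding (\ref{c6}). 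The divisor sums $\sigma(n)$ and $\sigma(n/2)$ then come not from a ``diagonal pole'' on the Lambert side but from the infinite-product term that survived from the left of (\ref{gls}). So the roles of the two sides are exactly reversed from what you describe, and the extra differentiation-and-limit in the auxiliary variable $a$ is the missing idea.

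Your claim that (\ref{t9}) follows by the same procedure with $c=1/3$ in place of $c=1/6$ is also too optimistic. The paper's proof of (\ref{t9}) does not reuse (\ref{c6}); it goes through a separate and substantially more elaborate identity (Theorem~\ref{t5}, equation (\ref{idt5})) built from \emph{two} specializations of (\ref{gls}), one with $r=2$, $s=3$ and one with $r=1$, $s=2$, combined via differentiation in yet another auxiliary variable $b$. After specializing $l=1$, $j=-3$ in (\ref{idt5}) one still needs two external inputs, namely \cite[Corollary~3.2]{chan1} and \cite[Eq.~(2.1)]{diss}, to collapse the remaining product terms and reach the target identity (\ref{idt9}). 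A single specialization of (\ref{gls}) with $r=0$, $s=3$ does not produce (\ref{t9}).
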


\begin{theorem} \label{t2}
For a fixed $n\in \mathbb{Z}^+$ and for any  $a, b\in \mathbb{Z}$, set $N:=\frac{1}{12}(3a-b-4)$ and $\tilde{N}:=\frac{1}{12}(3a+b-2)$. Then

\begin{equation} \label{t919}
\sum_{\substack{m\in \mathbb{Z}\\3m^2+2m\leq n}}
\left(m+\frac{1}{6}\right) c\left(\omega; n-3m^2-m\right) =
(-1)^{n+1} \sum_{\substack{a,b\in \mathbb{Z}\\ab=4n+3\\12|3a-b-4}} d\left(N,
\tilde{N}, \frac{1}{3}, \frac{1}{6}\right).
\end{equation}

\noindent For a fixed $n\in \mathbb{Z}^+$ and for any  $a, b\in \mathbb{Z}$,
set $N:=\frac{1}{6}(a-3b-2)$ and $\tilde{N}:=\frac{1}{6}(a+3b-2)$, and let
\[
R_n:=\left\{
       \begin{array}{ll}
\frac{2}{3}\left(\sigma\left(\frac{n}{4}\right) - \sigma\left(\frac{n}{2}\right)\right), & \hbox{if $n$ is even;} \vspace{.1in} \\

         \frac{1}{3}\sigma(n), & \hbox{if $n$ is odd.}
       \end{array}
     \right.
\]
Then
\begin{align} \label{t9201}
\sum_{\substack{m\in \mathbb{Z}\\3m^2+2m+1\leq n}}
\left(m+\frac{1}{3}\right) c\left(\frac{\omega(q)+\omega(-q)}{2}; n-3m^2-2m-1\right)
&=
R_n- \sum_{\substack{a,b\in \mathbb{Z}\\ab=n\\12|a-3b-8}} d\left(N,
\tilde{N}, \frac{1}{3}, \frac{1}{3}\right),
\intertext{and}
\label{t9202}
\sum_{\substack{m\in \mathbb{Z}\\3m^2+2m+1\leq n}}
\left(m+\frac{1}{3}\right) c\left(\frac{\omega(q)-\omega(-q)}{2}; n-3m^2-2m-1\right)
&=
-R_n+ \sum_{\substack{a,b\in \mathbb{Z}\\ab=n\\12|a-3b-2}} d\left(N,
\tilde{N}, \frac{1}{3}, \frac{1}{3}\right).
\end{align}
\end{theorem}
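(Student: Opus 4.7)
My plan is to deduce each of \eqref{t919}, \eqref{t9201} and \eqref{t9202} by specializing the generalized Lambert series identity \eqref{gls}, in parallel with the treatment of Theorem~\ref{t1}, and then equating coefficients of $q^n$. The goal of the specialization is to produce a generating-function identity of the schematic form
\[
\Theta(q)\cdot\omega(q)\;=\;\text{(explicit Lambert-type double sum)},
\]
where $\Theta(q)$ is a theta function whose Jacobi triple product expansion supplies the weights $(m+\tfrac{1}{6})$ and the exponents that, after convolution with $\omega(q)=\sum_n c(\omega;n)q^n$, realize the left-hand side of \eqref{t919} as the coefficient of $q^n$. Replacing $q$ by $-q$ and taking sum and difference divided by $2$ then isolates $\tfrac12(\omega(q)\pm\omega(-q))$ on the left, with the corresponding weights $(m+\tfrac{1}{3})$ producing the convolutions on the left of \eqref{t9201}--\eqref{t9202}.

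\emph{Step 1 (generating identity).} Take $r=2$, $s=3$ in \eqref{gls} and choose $a_1,a_2,b_1,b_2,b_3$ from sixth roots of unity times small fractional powers of $q$, so that the ratio of infinite products on the left collapses to the product of $\Theta(q)$ with the standard infinite-product form of the series defining $\omega(q)$. This choice is dictated by the shifts $\tfrac13,\tfrac16$ and the mod-$12$ congruences $12\mid 3a-b-4$, $12\mid a-3b-8$, $12\mid a-3b-2$ appearing in the statement; it is essentially forced by the requirement that the exponent of $q$ in the summand of the right-hand side of \eqref{gls} be an affine function of $k$ whose slope is compatible with these congruences.

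\emph{Step 2 (divisor-sum side).} On the right of \eqref{gls}, each of the three Lambert-type summands (the $b_1$-term and its two $\idem$-partners) has the shape $\sum_{k}\frac{(-1)^{k}q^{k(k+1)/2}}{1-b_iq^k}\cdot(\text{constant})^{k}$. Expanding $\frac{1}{1-b_iq^k}$ geometrically for $k\ne 0$, handling the $k=0$ terms separately, and reindexing via $(a,b)$ subject to $ab=4n+3$ (for \eqref{t919}) or $ab=n$ (for \eqref{t9201}--\eqref{t9202}) converts the Lambert sum into $\sum\sgn^+(N)\sgn^+(\tilde N)\bigl(|N+t|-|\tilde N+\tilde t|\bigr)$, with the absolute values emerging from the tally of the $q$-exponent $k(k+1)/2+k\ell$ after splitting by the sign of $k$. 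The residual $k=0$ contributions, once combined under the $q\mapsto-q$ symmetrization of Step 1, give Lambert series of the form $\sum_{k}q^{jk}/(1-q^{jk})$ with $j\in\{1,2,4\}$, whose coefficients are $\sigma(n),\sigma(n/2),\sigma(n/4)$; these assemble into $R_n$ with the case split on the parity of $n$.

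The main obstacle is the calibration of the parameters in \eqref{gls}: only a narrow window of specializations produces $\omega(q)$ rather than a different mock theta function on the left, while simultaneously yielding the precise mod-$12$ conditions and the shifts $(t,\tilde t)\in\{(\tfrac13,\tfrac16),(\tfrac13,\tfrac13)\}$ in $d(N,\tilde N,t,\tilde t)$ on the right. Correctly tracking the sign $(-1)^{n+1}$ in \eqref{t919}, and verifying the $\sgn^+$ convention at the boundary $k=0$ of the reindexing, will require careful case analysis. Once the specialization is in hand, the Jacobi triple product expansion of the theta factor, the geometric series reindexing of the Lambert terms, and the final coefficient comparison are routine.
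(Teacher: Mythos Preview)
Your overall shape—specialize \eqref{gls}, then extract coefficients—is right, but Step~1 as written cannot work. You propose choosing parameters so that the \emph{product side} of \eqref{gls} collapses to $\Theta(q)\cdot\omega(q)$. That is impossible: $\omega(q)$ is a mock theta function and has no infinite-product expression, so no specialization of the left-hand side of \eqref{gls} will ever produce it. In the paper, $\omega$ enters not through the product side but through its Appell--Lerch representation \eqref{om00},
\[
(q^2;q^2)_\infty\,\omega(q)=\sum_{n=-\infty}^{\infty}\frac{(-1)^n q^{3n^2+3n}}{1-q^{2n+1}},
\]
which is of the same type as the \emph{Lambert-series} summands on the right of \eqref{gls}. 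The target identities \eqref{t919a}, \eqref{c5}, \eqref{equiv} therefore equate one Appell--Lerch sum (essentially $\Theta\cdot\omega$) to others, and the product side of \eqref{gls} must be eliminated or transformed by auxiliary theta identities; it never becomes $\Theta\cdot\omega$ directly.

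A second gap is that a single static specialization of \eqref{gls} cannot generate the linear weights $(6n+1)$, $(3n-1)$, $(2n-1)$, etc.\ that are needed on the Lambert side to match the convolutions $(m+\tfrac16)c(\omega;\,\cdot\,)$ and $(m+\tfrac13)c(\omega;\,\cdot\,)$. The paper obtains these by introducing a free parameter into \eqref{gls} and then \emph{differentiating}: for \eqref{t919} and \eqref{t920c} one uses the $r=2$, $s=3$ case with a parameter $b$, applies $\tfrac{d}{db}\big|_{b=1}$, and combines the result with the $r=1$, $s=2$ case (this is the content of Theorems~\ref{t5} and~\ref{t53}); for \eqref{t920d} one uses $r=0$, $s=3$, differentiates in a parameter $a$ (or $b$), and then differentiates a second time after multiplying by a vanishing factor (yielding \eqref{lat3} and then \eqref{c5}). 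The residual product terms that survive these differentiations are then removed via known theta evaluations (e.g.\ \cite[Cor.~3.1, 3.2]{chan1} and \cite[Eq.~(2.1)]{diss}). None of this differentiation machinery appears in your plan, and without it the weights in $d(N,\tilde N,t,\tilde t)$ and the factor $R_n$ cannot arise.

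Finally, your Step~2 glosses over genuine work. The paper does not obtain $d(N,\tilde N,t,\tilde t)$ by a single geometric expansion; it first splits the sum over $(a,b)$ by sign and by the ranges $b\lessgtr 3a$ (as in \eqref{split1}, \eqref{split3}), uses a nontrivial swap identity to absorb the $6\mid 3a+b+3$ residue class, and only then matches to the generating functions. For \eqref{t9201}--\eqref{t9202}, the route is to pass first to the equivalent pair \eqref{t920c}--\eqref{t920d} for $\omega(q)$ and $\omega(-q)$ separately, prove those, and recombine; the $R_n$ term arises not from ``$k=0$ contributions'' but from the two Lambert series $\sum q^{2n-1}/(1+q^{2n-1})^2$ and $\sum q^{2n}/(1-q^{2n})^2$ that appear in \eqref{c5} after the second differentiation.
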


The identities (\ref{t1id})--(\ref{t9202}) were proven in \cite{irr} by applying holomorphic projection to the tensor product of a vector-valued harmonic weak Maass form of weight 1/2 and vector-valued modular form of weight 3/2 (see also \cite{dgo}, \cite{mertens}). In Remark 1, ii) of \cite{irr}, it was stated that these identities ``can sometimes also be furnished by Appell sums because these are typically expressible in terms of divisors. However, it is not clear whether Theorem 1 and 9 could be obtained using this idea".

Motivated by this remark, the main purpose of this paper is explain how (\ref{gls}) can be used to give a $q$-series proof of these identities. The idea is to compare the coefficients of $q^n$ in identities which express a modular form times either $f(q)$ or $\omega(q)$ (Appell-Lerch sums) in terms of Lambert series (divisor sums). Specifically, (\ref{t1id}) and (\ref{t9}) will basically follow from (\ref{c6}) and (\ref{idt9}), (\ref{t919}) from (\ref{t919a}), (\ref{t9201}) and (\ref{t9202}) from (\ref{pt920a}) and (\ref{c5}), respectively.

The paper is organized as follows. In Section 2, we discuss some preliminary $q$-series identities. In Section 3, we prove Theorems \ref{t1} and \ref{t2}. In Section 4, we discuss another application of (\ref{gls}) to other mock theta functions.

\section{Preliminaries}

To prove (\ref{t1id}), (\ref{t9201}) and (\ref{t9202}), we need the following two results.

\begin{lemma} We have
\begin{align}
\frac{(q)_\infty^2}{(-q)_\infty^2} \sum_{n=-\infty}^{\infty}
\frac{(-1)^{n}q^{n(3n+1)/2}}{1-xq^n}
&=4x\frac{(q)_\infty^2(q^2;q^2)_\infty^2}{[-x]_\infty[x^2;q^2]_\infty} + 2\sum_{n=-\infty}^{\infty}\frac{q^{n(3n+1)/2}}{(1+xq^n)^2} \nonumber \\
& + \sum_{n=-\infty}^{\infty}\frac{(6n-1)q^{n(3n+1)/2}}{1+xq^n}, \label{lat2} \\
q\frac{(q^2;q^2)_\infty^2}{(-q;q^2)_\infty^2} \sum_{n=-\infty}^{\infty} \frac{(-1)^{n}q^{3n(n+1)}}{1-xq^{2n}}
&= \frac{q}{x}\frac{(q^2;q^2)_\infty^4(-q;q^2)_\infty^2}{[x, -xq, -xq;q^2]_\infty} + \sum_{n=-\infty}^{\infty}\frac{q^{3n^2}}{(1+xq^{2n-1})^2} \nonumber \\
& + \sum_{n=-\infty}^{\infty}\frac{(3n-1)q^{3n^2}}{1+xq^{2n-1}}. \label{lat3}
\end{align}
\end{lemma}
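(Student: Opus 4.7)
The plan is to derive both identities as specializations of (\ref{gls}). The shape of each LHS Lambert series---the exponent $q^{n(3n+1)/2}$ and the sign $(-1)^n$---points to the choice $s-r=3$: the summand on the right of (\ref{gls}) then becomes $(-1)^{3k}q^{3k(k+1)/2}C^k/(1-b_1q^k)$, which collapses to $(-1)^k q^{k(3k+1)/2}/(1-xq^k)$ once one imposes $C:=a_1\cdots a_r b_1^{s-r-1}/(b_2\cdots b_s)=q^{-1}$ and sets $b_1=x$. The modular weight $(q)_\infty^2/(-q)_\infty^2$ on the LHS of (\ref{lat2}) should then emerge from the ratio of the LHS of (\ref{gls}) to the prefactor $[a_1/x]_\infty/[b_2/x,b_3/x,b_4/x]_\infty$ preceding the Lambert series, using $(x,-x;q)_\infty=(x^2;q^2)_\infty$ and the Jacobi triple product.

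The double pole $(1+xq^n)^{-2}$ on the right of (\ref{lat2}) cannot arise from any single term of the $\idem$ sum---only simple poles appear there---so a confluent limit is essential. I would take $r=1$, $s=4$ in (\ref{gls}) with $b_1=x$, $a_1=-x/q$, $b_2=-x$, and parameterize $b_3=-xy$, $b_4=-x/y$, so that $b_3 b_4=x^2$ and $C=q^{-1}$ for every $y$; then let $y\to 1$ so that $b_3,b_4$ coalesce at $-x$. Under these choices the $b_1\leftrightarrow b_2$ term in the $\idem$ sum vanishes (its numerator carries the factor $[1/q]_\infty=0$), while the two terms for $b_1\leftrightarrow b_3$ and $b_1\leftrightarrow b_4$ each diverge as $y\to 1$ through the prefactors $1/[b_4/b_3]_\infty=1/[1/y^2]_\infty$ and $1/[b_3/b_4]_\infty=1/[y^2]_\infty$, with opposite residues at $y=1$. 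Their sum admits a finite L'H\^opital limit, which should deliver the double pole $2\sum q^{k(3k+1)/2}/(1+xq^k)^2$ (from differentiating $1/(1-b_j q^k)$ at $b_j=-x$) and the single-pole correction $\sum (6k-1) q^{k(3k+1)/2}/(1+xq^k)$ (from combining the $y$-derivatives of $C(y)^k$, of the prefactor $[a_1/b_3,\ldots]_\infty/[\ldots,b_2/b_3,b_4/b_3]_\infty$, and of the $q$-Pochhammers that shift with $y$). The theta term $4x(q)_\infty^2(q^2;q^2)_\infty^2/([-x]_\infty[x^2;q^2]_\infty)$ is then identified by evaluating the LHS of (\ref{gls}) at the confluent limit and applying Jacobi's triple product.

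For (\ref{lat3}) the same strategy runs in base $q^2$: apply (\ref{gls}) with $q$ replaced by $q^2$, take $s-r=3$ in the new base, set $b_1=x$, and send $b_3,b_4\to -xq$ along a one-parameter family. The confluent L'H\^opital derivative then yields the double pole $(1+xq^{2k-1})^{-2}$, the single-pole correction with coefficient $3k-1$, and the theta factor $q(q^2;q^2)_\infty^4 (-q;q^2)_\infty^2/[x,-xq,-xq;q^2]_\infty$ appearing on the right of (\ref{lat3}).

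The main obstacle in both proofs is the L'H\^opital bookkeeping: one must track every $y$-dependent contribution---through $C(y)^k$, the infinite-product prefactors, and the $q$-Pochhammer factors that shift with $y$---and verify that the surviving $k$-linear coefficient on the single-pole Lambert series is exactly $6k-1$ (respectively $3k-1$), rather than $6k$ or another shifted linear polynomial in $k$.
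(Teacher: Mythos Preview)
Your strategy is viable and would ultimately succeed, but it takes a genuinely different path from the paper. The paper specializes (\ref{gls}) with $r=0$, $s=3$ and the one-parameter family $(b_1,b_2,b_3)=(x,-xq/a,-ax)$ for (\ref{lat2}) (respectively $(x,-x/a,-ax)$, followed by $q\mapsto q^2$ and $a=1/(bq)$, for (\ref{lat3})). Rather than take a bare L'H\^opital limit, the paper multiplies through by $a[-a,1/a^2]_\infty$, a factor carrying a simple zero at $a=1$ that exactly clears the simple poles of the $b_2$- and $b_3$-idem terms. Both sides are then analytic at $a=1$, and one ordinary differentiation at $a=1$ produces (\ref{lat2}) (respectively (\ref{lat3})) directly. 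Your $r=1$, $s=4$ setup with $a_1=-x/q$, $b_2=-x$ and the confluence $b_3,b_4\to -x$ reaches the same endpoint, but with an extra idem term that vanishes identically and with more theta factors to track in the L'H\^opital computation. What your route buys is the pleasant $y\leftrightarrow 1/y$ symmetry swapping the $b_3$- and $b_4$-terms, which makes the cancellation of leading residues transparent; what the paper's route buys is that no indeterminate form ever appears, so the ``bookkeeping obstacle'' you flag essentially disappears.

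One point in your write-up is slightly misleading and will bite you if you execute the computation: the divergence of the $b_3$-term does \emph{not} come solely from $1/[b_4/b_3]_\infty=1/[1/y^2]_\infty$. With your choices one also has $[b_2/b_3]_\infty=[1/y]_\infty$ in the denominator and $[a_1/b_3]_\infty=[1/(qy)]_\infty$ in the numerator, and \emph{both} of these vanish at $y=1$. The net order is indeed a simple pole, but only because these two extra zeros cancel one another; if you overlook either you will get the wrong constant in front of the $\sum(6k-1)q^{k(3k+1)/2}/(1+xq^k)$ term. Similarly, your remark about ``the $y$-derivatives of $C(y)^k$'' needs care: the global $C=a_1 b_1^2/(b_2b_3b_4)=q^{-1}$ is $y$-independent by design, but the analogous ratio for the $b_3$-idem term is $-y^3/q$, and it is this that supplies the $3k$ contribution.
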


\begin{proof}
Beginning with the case $r=0$, $s=3$ in \eqref{gls}, setting
$(b_1,b_2,b_3)=(x,-xq/a,-ax)$ and multiplying by $a[-a,
1/a^2]_\infty$, we find
\begin{align}  \nonumber
a\frac{(q)_\infty^2[-a,1/a^2]_\infty}{[x,-xq/a,-ax]_\infty}
&=
 \frac{[1/a^2]_\infty}{[-1/a]_\infty} \sum_{n=-\infty}^{\infty}
\frac{(-1)^{n}q^{n(3n+1)/2}}{1-xq^n}
+
 \sum_{n=-\infty}^{\infty}
\frac{q^{n(3n+1)/2}a^{-3n-1}}{1+xq^{n}/a}
\nonumber \\
& - \sum_{n=-\infty}^{\infty}
\frac{q^{n(3n+1)/2}a^{3n}}{1+axq^n}.
\label{at4}
\end{align}
Differentiating \eqref{at4} with respect to $a$ and letting $a\to
1$, we find that
\begin{align} \nonumber
4\frac{(q)_\infty^4(-q)_\infty^2}{[x, -xq, -x]_\infty}
&=
 \frac{(q)_\infty^2}{(-q)_\infty^2} \sum_{n=-\infty}^{\infty}
\frac{(-1)^{n}q^{n(3n+1)/2}}{1-xq^n}
-
\sum_{n=-\infty}^{\infty}\frac{q^{n(3n+1)/2}[1+3n(1+xq^n)]}{(1+xq^n)^2}
\\
&-\sum_{n=-\infty}^{\infty}\frac{q^{n(3n+1)/2}[1+(3n-1)(1+xq^n)]}{(1+xq^n)^2}.
\label{at8}
\end{align}
Rearranging and simplifying, we find that \eqref{at8} is
equivalent to \eqref{lat2}.

Again, for the case $r=0$, $s=3$ in \eqref{gls}, setting
$(b_1,b_2,b_3)=(x,-x/a,-ax)$, and multiplying by $[-a,
1/a^2]_\infty$, we find
\begin{align}  \nonumber
\frac{(q)_\infty^2[-a, 1/a^2]_\infty}{[x, -x/a, -ax]_\infty} &=
 \frac{[1/a^2]_\infty}{[-1/a]_\infty} \sum_{n=-\infty}^{\infty}
\frac{(-1)^{n}q^{3n(n+1)/2}}{1-xq^n} -
\sum_{n=-\infty}^{\infty}\frac{q^{3n(n+1)/2}a^{-3n-2}}{1+xq^n/a} \\
& +
\sum_{n=-\infty}^{\infty}\frac{q^{3n(n+1)/2}a^{3n+1}}{1+axq^n}.
\label{at3}
\end{align}Replacing $q$ by $q^2$ and setting $a=1/bq$ in \eqref{at3},  we find that
\begin{align} \nonumber
\frac{(q^2;q^2)_\infty^2[-1/(bq), b^2q^2;q^2]_\infty}{[x, -xbq, -x/(bq);q^2]_\infty}
&=
 \frac{[b^2q^2;q^2]_\infty}{[-bq;q^2]_\infty} \sum_{n=-\infty}^{\infty}
\frac{(-1)^{n}q^{3n(n+1)}}{1-xq^{2n}}
-
\sum_{n=-\infty}^{\infty}\frac{q^{3n^2-1}b^{3n-1}}{1+xbq^{2n-1}} \\
&+
\sum_{n=-\infty}^{\infty}\frac{q^{3n^2-1}b^{-3n-1}}{1+xq^{2n-1}/b}.
\label{at6}
\end{align}
Differentiating \eqref{at6}
with respect to $b$ and letting $b\to 1$, we arrive at
\begin{align} \nonumber
2\frac{(q^2;q^2)_\infty^4[-1/q;q^2]_\infty}{[x, -xq, -x/q;q^2]_\infty}
&=
2 \frac{(q^2;q^2)_\infty^2}{[-q;q^2]_\infty} \sum_{n=-\infty}^{\infty}
\frac{(-1)^{n}q^{3n(n+1)}}{1-xq^{2n}}
-
\sum_{n=-\infty}^{\infty}\frac{q^{3n^2-1}[1+(3n-2)(1+xq^{2n-1})]}{(1+xq^{2n-1})^2} \\
&-
\sum_{n=-\infty}^{\infty}\frac{q^{3n^2-1}[1+3n(1+xq^{2n-1})]}{(1+xq^{2n-1})^2}.
\label{at7}
\end{align}
Rearranging and simplifying, we find that \eqref{at7} is
equivalent to \eqref{lat3}.

\end{proof}

\begin{corollary} We have
\begin{align}
\frac{(q)_\infty^3}{(-q)_\infty^2} f(q) &=-4\sum_{n=1}^\infty
\frac{q^n}{(1-q^{n})^2} -16\sum_{n=1}^\infty
\frac{q^{2n}}{(1-q^{2n})^2}
+1+4\sum_{\substack{n=-\infty\\n\neq0}}^\infty\frac{q^{n(3n+1)/2}}{(1-q^{n})^2} \nonumber \\
&+ 2\sum_{\substack{n=-\infty\\ n\neq 0}}^\infty
\frac{(6n-1)q^{n(3n+1)/2}}{1-q^{n}}, \label{c6}
\\
\nonumber q\frac{(q^2;q^2)_\infty^3}{(-q;q^2)_\infty^2} \omega(-q)
&=\sum_{n=1}^\infty \frac{q^{2n-1}}{(1+q^{2n-1})^2}
-2\sum_{n=1}^\infty \frac{q^{2n}}{(1-q^{2n})^2}
+\sum_{\substack{n=-\infty\\n\neq0}}^\infty\frac{q^{3n^2}}{(1-q^{2n})^2} \nonumber \\
&+ \sum_{\substack{n=-\infty\\ n\neq 0}}^\infty
\frac{(3n-1)q^{3n^2}}{1-q^{2n}}. \label{c5}
\end{align}
\end{corollary}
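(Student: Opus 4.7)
The plan is to derive each of \eqref{c6} and \eqref{c5} by specializing the parameter $x$ in one of the Lambert-series identities of the preceding Lemma, and then rewriting the left-hand side in terms of $f(q)$ or $\omega(-q)$ via a classical bilateral Lambert-series representation. For \eqref{c6}, the natural specialization is the limit $x\to -1$ in \eqref{lat2}: this turns the denominators $1+xq^n$ and $(1+xq^n)^2$ on the right-hand side into $1-q^n$ and $(1-q^n)^2$ as in \eqref{c6}, and it already reproduces the coefficient $6n-1$ visible on both sides. Simultaneously, the Lambert sum $\sum_{n}(-1)^{n}q^{n(3n+1)/2}/(1+q^n)$ emerging on the left of \eqref{lat2} equals $\tfrac12(q)_\infty f(q)$ by a classical identity of Watson. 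For \eqref{c5}, the analogous specialization is $x\to -q$ in \eqref{lat3}, which sends $1+xq^{2n-1}\mapsto 1-q^{2n}$ and identifies the left-hand Lambert sum with $(q^2;q^2)_\infty\,\omega(-q)$ via the corresponding classical identity for $\omega(-q)$.

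The subtlety is that $x=-1$ (resp.\ $x=-q$) is a pole of several individual terms on the right-hand side of the Lemma. In \eqref{lat2} at $x\to -1$, the first term has a double pole because $[-x]_\infty[x^2;q^2]_\infty$ vanishes like $2(1+x)^2(q)_\infty^2(q^2;q^2)_\infty^2$, the $n=0$ summand of the second term contributes an additional $2/(1+x)^2$, and the $n=0$ summand of the third contributes $-1/(1+x)$. Direct expansion shows that the $(1+x)^{-2}$ and $(1+x)^{-1}$ principal parts cancel exactly. To extract the finite part I would Taylor-expand each of $(-x;q)_\infty,(-q/x;q)_\infty,(x^2;q^2)_\infty,(q^2/x^2;q^2)_\infty$ to order $(1+x)^2$ via the standard logarithmic-derivative formula $\tfrac{d}{da}\log(a;q)_\infty=-\sum_{k}q^k/(1-aq^k)$; after simplifying the resulting Lambert sums using the routine identity $q^{2k}/(1-q^k)^2=q^k/(1-q^k)^2-q^k/(1-q^k)$, the constant-order contribution of the first term collapses to $\tfrac12-2\sum_{n\geq 1}q^n/(1-q^n)^2-8\sum_{n\geq 1}q^{2n}/(1-q^{2n})^2$. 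Combining with the constant parts of the other two sums and doubling (to account for the Watson factor of $\tfrac12$) yields \eqref{c6}.

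The argument for \eqref{c5} proceeds in parallel. At $x\to -q$ in \eqref{lat3}, the first term has a double pole $-q^2/(q+x)^2$ coming from $[-xq;q^2]_\infty\sim-(q+x)(q^2;q^2)_\infty^2/q$, which is cancelled by the $q^2/(q+x)^2$ contributed by the $n=0$ summand of the second term; the cancellation of the simple poles rests on the immediate identity $q\sum_{k\geq 1}q^{2k-1}/(1-q^{2k})=\sum_{k\geq 1}q^{2k}/(1-q^{2k})$. The second-order expansion of $[x;q^2]_\infty$ and $[-xq;q^2]_\infty^2$ about $x=-q$ then produces precisely the Eisenstein-type sums $\sum_{n\geq 1}q^{2n-1}/(1+q^{2n-1})^2$ and $-2\sum_{n\geq 1}q^{2n}/(1-q^{2n})^2$ on the right-hand side of \eqref{c5}. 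The main technical obstacle in both parts is the careful second-order Taylor expansion of the theta products at the singular point, together with the identification of the resulting Lambert sums with the Eisenstein-type terms in the stated form; once this bookkeeping is completed, the remaining steps are purely algebraic.
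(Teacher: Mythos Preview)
Your approach is correct and is essentially the paper's: both specialize \eqref{lat2} at $x=-1$ (resp.\ \eqref{lat3} at $x=-q$) and invoke the bilateral representations \eqref{mockf} and \eqref{om00} to identify the left-hand Lambert sums with $\tfrac12(q)_\infty f(q)$ and $(q^2;q^2)_\infty\,\omega(-q)$. The only difference is organizational: rather than computing the second-order Laurent expansion of the theta quotient by hand, the paper multiplies \eqref{lat2} by $(1+x)(1+1/x)$ (resp.\ \eqref{lat3} by $(1+q/x)(1+x/q)$), differentiates twice with respect to $x$, and then sets $x=-1$ (resp.\ $x=-q$); because the total principal part on the right cancels, this extracts precisely the constant Laurent coefficient you are after while bypassing most of the bookkeeping.
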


\begin{proof}
\noindent For \eqref{c6}, multiply both sides of \eqref{lat2} by
$(1+x)(1+1/x)$, differentiate twice with respect to $x$, set
$x=-1$ and use

\begin{equation} \label{mockf}
f(q)= \frac{2}{(q)_\infty}\sum_{n=-\infty}^\infty\frac{(-1)^nq^{n(3n+1)/2}}{1+q^n}.
\end{equation}

\noindent Here, we have set $x=1$ in the identity (see (12.2.5) in \cite{ab})

\begin{equation*}
\sum_{n=0}^\infty \frac{q^{2n^2+2n}}{(xq;q^2)_{n+1}(q/x;q^2)_{n+1}} =\frac{1}{(q^2;q^2)_\infty} \sum_{n=-\infty}^\infty\frac{(-1)^nq^{3n^2+3n}}{1-xq^{2n+1}}.
\end{equation*}

For \eqref{c5}, multiply both sides of \eqref{lat3} by
$(1+q/x)(1+x/q)$, differentiate twice with respect to $x$, set
$x=-q$ and use

\begin{equation} \label{om00}
\omega(q) = \frac{1}{(q^2;q^2)_\infty}
\sum_{n=-\infty}^\infty\frac{(-1)^nq^{3n^2+3n}}{1-q^{2n+1}}.
\end{equation}

\noindent The latter follows from taking $x=-1$ in the identity
(see (12.2.3) in \cite{ab})

\begin{equation*}
\sum_{n=0}^\infty
\frac{q^{n^2}}{(xq;q)_n(q/x;q)_n}=\frac{(1-x)}{(q)_\infty}\sum_{n=-\infty}^\infty\frac{(-1)^nq^{n(3n+1)/2}}{1-xq^n}.
\end{equation*}
\end{proof}

To prove \eqref{t9}, \eqref{t9201} and \eqref{t9202}, we also need the following.
\begin{theorem}\label{t5}
For integers $l$ and $j,$ we have
\begin{align}
\frac{[-q^{l+j},q^{2l};q^{6l}]_\infty(q^{6l};q^{6l})^2_\infty}
{[-q^{j},q^{l},-q^{2l+j};q^{6l}]_\infty} \bigg\{j &+ 2
\sum_{m=0}^\infty\bigg(\frac{ q^{6lm+l}}{1-q^{6lm+l}}-\frac{ q^{6lm+5l}}{1-q^{6lm+5l}}+\frac{ q^{6lm+l+j}}{1+q^{6lm+l+j}} \nonumber \\
& -\frac{ q^{6lm+5l-j}}{1+q^{6lm+5l-j}} -2\frac{ q^{6lm+3l-j}}{1+q^{6lm+3l-j}}+2\frac{ q^{6lm+3l+j}}{1+q^{6lm+3l+j}}\bigg)\bigg\}\nonumber\\
&=\sum_{n=-\infty}^\infty \frac{(6n+2+j)(-1)^nq^{3ln(n+1)+ln+l}}{1+q^{6ln+j+2l}} \nonumber \\
&+\sum_{n=-\infty}^\infty \frac{(6n+j)(-1)^nq^{3ln(n+1)-ln}}{1+q^{6ln+j}} \nonumber\\
&+\frac{4(q^{2l};q^{2l})^2_\infty}
{(q^{l};q^{2l})^2_\infty}\sum_{n=-\infty}^\infty \frac{(-1)^nq^{3ln(n+1)+2ln+2l}}{1+q^{6ln+j+3l}}
\label{idt5}.
\end{align}
\end{theorem}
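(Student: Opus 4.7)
The strategy is to adapt the parameter-differentiation approach used in the proof of the preceding lemma. I would apply the master identity \eqref{gls} with $r = 2$ and $s = 3$, replacing $q$ by $q^{6l}$ so that the quadratic $q$-exponent in the Appell-type sum on the right becomes $q^{3lk(k+1)}$, matching the exponent $q^{3ln(n+1)}$ appearing on the right-hand side of \eqref{idt5}. The parameters $(a_1, a_2, b_1, b_2, b_3)$ are chosen to specialize at $a = 1$ to $(-q^{l+j}, q^{2l}, -q^{j}, q^{l}, -q^{2l+j})$, so that the theta prefactor on the left of \eqref{gls} matches the one on the left of \eqref{idt5}. The auxiliary parameter $a$ is inserted into these values (for instance, as $a_1 = -q^{l+j}a$ and $a_2 = q^{2l}/a$), and both sides are multiplied by a further $a$-dependent prefactor of the form $[a^{-2}; q^{6l}]_\infty/[-a^{-1}; q^{6l}]_\infty$, mirroring the role of $[-a, 1/a^2]_\infty$ in the derivation of \eqref{at4}.

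Next, I differentiate both sides with respect to $a$ and let $a \to 1$. On the left, the logarithmic derivatives of the $a$-dependent theta products produce exactly the six-term Lambert series in braces on the left of \eqref{idt5}: the $\pm 1$ coefficients arise from pieces linear in $a$ (in $a_1$, $a_2$, and the $b_i$'s), while the $\pm 2$ coefficients arise from the $a^{-2}$ prefactor used to symmetrize the identity. On the right, two of the three idem contributions produce the regular sums with denominators $1 + q^{6ln+j}$ and $1 + q^{6ln+j+2l}$; the $a$-derivative of the summand factor $(a_1 a_2/(b_2 b_3))^k$, together with the derivatives of the $a$-dependent $b_i$ factors, supplies the linear prefactors $(6n+j)$ and $(6n+2+j)$. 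The third idem contribution is singular: its theta prefactor contains a factor vanishing at $a=1$ (analogous to the zero of $[1/a^2]_\infty$ at $a=1$ in the proof of \eqref{lat2}), so L'H\^opital's rule produces the modular-weighted sum with prefactor $4(q^{2l};q^{2l})_\infty^2/(q^{l};q^{2l})_\infty^2$ and denominator $1 + q^{6ln+j+3l}$.

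The main technical obstacle is pinning down the precise parameterization so that all three features on the right of \eqref{idt5} appear simultaneously and with the correct coefficients: the two regular sums with their linear weights and exponents, the singular sum with its modular coefficient, and the six logarithmic-derivative contributions on the left with the correct signs. The computation will involve converting Pochhammer symbols of base $q^{2l}$ to base $q^{6l}$ via the standard factorization $(-q^\alpha; q^{2l})_\infty = (-q^\alpha, -q^{\alpha+2l}, -q^{\alpha+4l}; q^{6l})_\infty$, but once the parameterization is fixed the remaining steps reduce to the same Jacobi-triple-product bookkeeping already used in the proof of the preceding lemma.
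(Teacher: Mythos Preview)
Your plan has a structural gap that the single-identity approach cannot close. By choosing $(a_1,a_2,b_1,b_2,b_3)$ to specialize at $a=1$ to $(-q^{l+j},q^{2l},-q^{j},q^{l},-q^{2l+j})$ so that the theta prefactor of \eqref{gls} matches the left side of \eqref{idt5}, you have fixed the three Appell--Lerch denominators on the right to be $1+q^{6ln+j}$, $1-q^{6ln+l}$, and $1+q^{6ln+2l+j}$. None of these is $1+q^{6ln+j+3l}$, so the third sum in \eqref{idt5} simply cannot appear. With those $b_i$ none of the idem prefactors $[a_1/b_i,a_2/b_i]/[b_j/b_i,b_k/b_i]$ acquires a vanishing factor at $a=1$, so there is no intrinsic singularity for L'H\^opital to act on. Your proposed cure---multiplying by an external factor $[a^{-2};q^{6l}]_\infty/[-a^{-1};q^{6l}]_\infty$ that vanishes at $a=1$---then backfires on the left: one differentiation of a product (vanishing factor)$\times$(regular factor) yields only the constant $\big(\tfrac{d}{da}[a^{-2}]/[-a^{-1}]\big)\big|_{a=1}$ times the $a=1$ value of the theta quotient, and the logarithmic-derivative Lambert series you need in the braces is lost. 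In short, you can arrange either the Lambert series on the left (via log-differentiation of a nonvanishing product) or a modular-weighted singular term on the right (via a vanishing prefactor), but not both from a single differentiation with these parameters.

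The paper avoids this by \emph{not} matching the left-hand prefactor directly. It applies \eqref{gls} with $r=2$, $s=3$ and $(b_1,b_2,b_3)=(-bq^{j+3l},-q^{j+2l},-q^{j})$, so that the $b_1$-idem term has both the correct denominator $1+q^{6ln+j+3l}$ and an intrinsic zero in its prefactor (since $a_1/b_1=1/b\to 1$); differentiating in $b$ at $b=1$ then gives \eqref{pt52}, with the Lambert series on the left coming from the genuine $b$-dependence of the theta quotient. A companion identity \eqref{pt51} is obtained the same way, and a third application of \eqref{gls} with $r=1$, $s=2$ supplies \eqref{pt54}. The target \eqref{idt5} is the linear combination $2\cdot\eqref{pt51}-4\cdot\eqref{pt52}+j\cdot\eqref{pt54}$. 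So the correct heuristic is to let the Appell--Lerch denominators on the right of \eqref{idt5} dictate the $b_i$, accept that the resulting left-hand prefactor will not match immediately, and then recover the desired form by combining several such identities.
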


\begin{proof}
First, we need to prove the following:

\begin{align}
\frac{[-q^{l+j},q^{2l};q^{6l}]_\infty(q^{6l};q^{6l})^2_\infty}{[-q^{j},q^{l},-q^{2l+j};q^{6l}]_\infty} & \sum_{m=0}^\infty\left(\frac{ q^{6lm+l}}{1-q^{6lm+l}}-\frac{ q^{6lm+5l}}{1-q^{6lm+5l}}+\frac{ q^{6lm+l+j}}{1+q^{6lm+l+j}}-\frac{ q^{6lm+5l-j}}{1+q^{6lm+5l-j}}\right)\nonumber\\
&= \Biggl( 2\sum_{m=0}^\infty\left(\frac{ q^{6lm+l}}{1-q^{6lm+l}}-\frac{ q^{6lm+5l}}{1-q^{6lm+5l}}\right) \times\sum_{n=-\infty}^\infty \frac{(-1)^nq^{3ln(n+1)+ln+l}}{1+q^{6ln+j+2l}} \Biggr)
\nonumber\\
&+\sum_{n=-\infty}^\infty \frac{(n+1)(-1)^nq^{3ln(n+1)+ln+l}}{1+q^{6ln+j+2l}} + \sum_{n=-\infty}^\infty \frac{n(-1)^nq^{3ln(n+1)-ln}}{1+q^{6ln+j}}
\label{pt51}
\end{align}
and
\begin{align}
\frac{[-q^{l+j},q^{2l};q^{6l}]_\infty(q^{6l};q^{6l})^2_\infty} {[-q^{j},q^{l},-q^{2l+j};q^{6l}]_\infty} & \sum_{m=0}^\infty\left(\frac{ q^{6lm+3l-j}}{1+q^{6lm+3l-j}}-\frac{ q^{6lm+3l+j}}{1+q^{6lm+3l+j}}\right)\nonumber\\
&=\Biggl( \sum_{m=0}^\infty\left(\frac{ q^{6lm+l}}{1-q^{6lm+l}}-\frac{ q^{6lm+5l}}{1-q^{6lm+5l}}\right) \times\sum_{n=-\infty}^\infty \frac{(-1)^nq^{3ln(n+1)+ln+l}}{1+q^{6ln+j+2l}} \Biggr) \nonumber \\
& -\sum_{n=-\infty}^\infty \frac{n(-1)^nq^{3ln(n+1)+ln+l}}{1+q^{6ln+j+2l}} -\sum_{n=-\infty}^\infty \frac{n(-1)^nq^{3ln(n+1)-ln}}{1+q^{6ln+j}} \nonumber \\
&-\frac{[q^{-2l},q^{2l};q^{6l}]_\infty(q^{6l};q^{6l})^2_\infty}
{[q^{3l},q^{l},q^{-l};q^{6l}]_\infty}\sum_{n=-\infty}^\infty \frac{(-1)^nq^{3ln(n+1)+2ln+3l}}{1+q^{6ln+j+3l}}
\label{pt52}.
\end{align}

\noindent As the proofs of (\ref{pt51}) and (\ref{pt52}) are similar, we only give details for \eqref{pt52}. Replacing $q, a_1, a_2, b_1, b_2$ and $b_3$ by $q^{6l}, -q^{j+3l}, -q^{j+l}, -bq^{j+3l}, -q^{j+2l}$ and $-q^{j}$, respectively, in \eqref{gls} with $r=2, s=3$ and after rearranging, we obtain

\begin{align}
\frac{[-q^{3l+j},-q^{j+l}, q^{2l}, bq^{3l};q^{6l}]_\infty(q^{6l};q^{6l})^2_\infty} {[-q^{j+2l},-q^{j},q^3l,q^{l},-bq^{j+3l};q^{6l}]_\infty} &=\frac{b[1/b,q^{-2l}/b,q^{2l};q^{6l}]_\infty}{[q^3l,q^{l},q^{-l}/b;q^{6l}]_\infty} \sum_{n=-\infty}^\infty \frac{(-1)^{n+1}q^{3ln(n+1)+2ln+3l}}{1+bq^{6ln+j+3l}}\nonumber\\
&+ \frac{[q^{l},bq^{3l};q^{6l}]_\infty}{[q^{3l},bq^{l};q^{6l}]_\infty}\sum_{n=-\infty}^\infty \frac{(-1/b)^nq^{3ln(n+1)+ln+l}}{1+q^{6ln+j+2l}} \nonumber \\
& +\sum_{n=-\infty}^\infty \frac{(-1/b)^nq^{3ln(n+1)-ln}}{1+q^{6ln+j}}. \label{eastharlem}
\end{align}

\noindent Applying the operator $\frac{d}{db}\Big|_{b=1}$ on both sides of \eqref{eastharlem} and simplifying, we obtain \eqref{pt52}. If we multiply four times \eqref{pt52}, then subtract from twice \eqref{pt51}, we find

\begin{align}
\frac{[-q^{l+j},q^{2l};q^{6l}]_\infty(q^{6l};q^{6l})^2_\infty}{[-q^{j},q^{l},-q^{2l+j};q^{6l}]_\infty} & \sum_{m=0}^\infty\bigg\{2\left(\frac{ q^{6lm+l}}{1-q^{6lm+l}}-\frac{ q^{6lm+5l}}{1-q^{6lm+5l}}+\frac{ q^{6lm+l+j}}{1+q^{6lm+l+j}}-\frac{ q^{6lm+5l-j}}{1+q^{6lm+5l-j}}\right)\nonumber\\
& -4 \left(\frac{ q^{6lm+3l-j}}{1+q^{6lm+3l-j}}-\frac{ q^{6lm+3l+j}}{1+q^{6lm+3l+j}}\right)\bigg\}\nonumber\\
&=\sum_{n=-\infty}^\infty \frac{(6n+2)(-1)^nq^{3ln(n+1)+ln+l}}{1+q^{6ln+j+2l}}
+\sum_{n=-\infty}^\infty \frac{6n(-1)^nq^{3ln(n+1)-ln}}{1+q^{6ln+j}} \nonumber \\
& +\frac{4(q^{2l};q^{2l})^2_\infty} {(q^{l};q^{2l})^2_\infty}\sum_{n=-\infty}^\infty \frac{(-1)^nq^{3ln(n+1)+2ln+2l}}{1+q^{6ln+j+3l}}
\label{pt53}.
\end{align}

\noindent Replacing $q, a_1, b_1, b_2$ by $q^{6l}, -q^{l+j}, -q^j, -q^{j+2l}$ in \eqref{gls} with $r=1, s=2$, we obtain

\begin{align}\label{pt54}
\frac{[-q^{l+j},q^{2l};q^{6l}]_\infty(q^{6l};q^{6l})^2_\infty}
{[-q^{j},q^{l},-q^{2l+j};q^{6l}]_\infty}=
\sum_{n=-\infty}^\infty \frac{(-1)^nq^{3ln(n+1)+ln+l}}{1+q^{6ln+j+2l}}
+\sum_{n=-\infty}^\infty \frac{(-1)^nq^{3ln(n+1)-ln}}{1+q^{6ln+j}}
\end{align}

\noindent Finally, \eqref{pt53} plus $j$ times \eqref{pt54} yields \eqref{idt5}.
\end{proof}

To prove \eqref{t919}, we need the following result whose proof is analogous to that of Theorem \ref{t5} and thus is omitted.

\begin{theorem}\label{t53}
We have

\begin{align}
\frac{[q^{2l+j},q^{4l};q^{6l}]_\infty(q^{6l};q^{6l})^2_\infty}{[-q^{j},-q^{2l},-q^{4l+j};q^{6l}]_\infty}\bigg\{j&+\sum_{m=0}^\infty2\biggl(\frac{ q^{6lm+2l}}{1+q^{6lm+2l}}-\frac{ q^{6lm+4l}}{1+q^{6lm+4l}}-\frac{ q^{6lm+2l+j}}{1-q^{6lm+2l+j}}\nonumber \\
& +\frac{ q^{6lm+4l-j}}{1-q^{6lm+4l-j}}\biggr) -4 \left(\frac{ q^{6lm+4l+j}}{1-q^{6lm+4l+j}}-\frac{ q^{6lm+2l-j}}{1-q^{6lm+2l-j}}\right)\bigg\}\nonumber\\
&=-\sum_{n=-\infty}^\infty \frac{(6n+4+j)q^{3ln(n+1)+2ln+2l}}{1+q^{6ln+j+4l}} \nonumber \\
& +\sum_{n=-\infty}^\infty \frac{(6n+j)q^{3ln(n+1)-2ln}}{1+q^{6ln+j}} \nonumber \\
& +\frac{2(q^{2l};q^{2l})^2_\infty}{(-q^{2l};q^{2l})^2_\infty}\sum_{n=-\infty}^\infty \frac{(-1)^nq^{3ln(n+1)+2ln+2l}}{1-q^{6ln+j+4l}}. \nonumber
\end{align}
\end{theorem}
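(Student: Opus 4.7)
The plan is to follow the template of the proof of Theorem \ref{t5} essentially verbatim, replacing the specific parameter choices in \eqref{gls} with ones tailored to the sign pattern in the statement of Theorem \ref{t53}. Three applications of the generalized Lambert series identity \eqref{gls} will be required, in exact parallel with the roles played by \eqref{pt51}, \eqref{pt52}, and \eqref{pt54}.

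First, I would apply \eqref{gls} in the $r=2$, $s=3$ regime with $q\to q^{6l}$ and parameters $(a_1,a_2,b_1,b_2,b_3)$ chosen so that, after introducing an auxiliary parameter $b$ exactly as in \eqref{eastharlem}, the right-hand side becomes a sum of three Lambert-type series whose denominators are $1+bq^{6ln+j+4l}$, $1+q^{6ln+j+4l}$, and $1+q^{6ln+j}$. A natural candidate is $(a_1,a_2,b_1,b_2,b_3) = (q^{j+4l}, q^{j+2l}, bq^{j+4l}, -q^{j+2l}, -q^{j})$ or a closely related variant, the opposite signs on $b_2,b_3$ being dictated by the presence of $1+q^{6lm+\ast}$ (rather than $1-q^{6lm+\ast}$) in the summands on the left-hand side of the statement. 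Applying $\frac{d}{db}\big|_{b=1}$ and rearranging then yields the analogue of \eqref{pt52}. A companion specialization with a slightly different sign configuration produces the analogue of \eqref{pt51}, and an $r=1$, $s=2$ specialization with $q\to q^{6l}$, $a_1\to q^{j+2l}$, and $(b_1,b_2)=(-q^j,-q^{j+4l})$ produces the analogue of \eqref{pt54}; here the symmetry $[q^{4l-j};q^{6l}]_\infty = [q^{2l+j};q^{6l}]_\infty$ is needed to recognize the prefactor as the one appearing in Theorem \ref{t53}.

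The final step is to form the same linear combination used to derive \eqref{idt5} from \eqref{pt51}--\eqref{pt54}, namely twice the first intermediate identity minus four times the second, plus $j$ times the third. After simplification the terms with double-pole denominators $(1+q^{6lm+\ast})^2$ cancel, the remaining $m$-sums combine into the prescribed combination of single-pole Lambert series in $m$ on the left, and the $n$-sums reorganize into the three Lambert-type sums in the statement, carrying the coefficients $6n+4+j$, $6n+j$, and the multiplier of the third sum.

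The main obstacle is purely computational bookkeeping of the quotients of theta-like products $[\cdot;q^{6l}]_\infty$ through each step. In particular, the ratio of prefactors between the two $r=2$, $s=3$ applications must produce the factor $2(q^{2l};q^{2l})_\infty^2/(-q^{2l};q^{2l})_\infty^2$ on the right-hand side, as opposed to the factor $4(q^{2l};q^{2l})_\infty^2/(q^l;q^{2l})_\infty^2$ that appeared in Theorem \ref{t5}. This difference is ultimately responsible for the change in coefficient from $4$ to $2$ and for the switch $q^l\to -q^{2l}$ inside the product. Sign errors and failures to invoke $[a;q]_\infty=[q/a;q]_\infty$ at the right moment are the most likely source of mistakes, but no ideas beyond those deployed in the proof of Theorem \ref{t5} are needed.
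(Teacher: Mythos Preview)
Your proposal is correct and matches the paper's approach: the paper omits the proof of Theorem \ref{t53} entirely, stating only that it is analogous to that of Theorem \ref{t5}, which is precisely the template you follow. The specific parameter guess for $b_2$ in the $r=2$, $s=3$ specialization will need a small adjustment (one wants $b_2=-q^{j+4l}$ so that the three denominators are $1-bq^{6ln+j+4l}$, $1+q^{6ln+j+4l}$, and $1+q^{6ln+j}$, matching the sign pattern on the right of Theorem \ref{t53}), but as you anticipate this is bookkeeping rather than a new idea.
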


\section{Proofs of Theorems \ref{t1} and \ref{t2}}

\begin{proof}[Proof of Theorem \ref{t1}]

We begin with the proof of (\ref{t1id}). First, note that

\begin{align*}
\sum_{n=1}^\infty \frac{q^n}{(1-q^n)^2}  + 4 \sum_{n=1}^\infty \frac{q^{2n}}{(1-q^{2n})^2}
&=
\sum_{n=1}^\infty \sum_{m=1}^\infty m q^{mn}  + 4 \sum_{n=1}^\infty \sum_{m=1}^\infty  m q^{2mn},
\\
\sum_{\substack{n=-\infty\\n\neq0}}^\infty\frac{q^{n(3n+1)/2}}{(1-q^{n})^2}
&=
\sum_{n=1}^\infty\frac{q^{n(3n+1)/2}}{(1-q^{n})^2}
+\sum_{n=1}^\infty\frac{q^{n(3n-1)/2+2n}}{(1-q^{n})^2}\\
&=
\sum_{n=1}^\infty q^{n(3n+1)/2}\sum_{m=1}^\infty mq^{nm-n}
+\sum_{n=1}^\infty q^{n(3n+1)/2}\sum_{m=2}^\infty (m-1)q^{nm-n}
\\
&=
\sum_{m=1}^\infty (2m-1)\sum_{n=1}^\infty q^{n(3n+2m-1)/2},
\intertext{and}
\sum_{\substack{n=-\infty\\ n\neq 0}}^\infty
\frac{(6n-1)q^{n(3n+1)/2}}{1-q^{n}}
&=
\sum_{n=1}^\infty
\frac{(6n-1)q^{n(3n+1)/2}}{1-q^{n}}
+
\sum_{n=1}^\infty
\frac{(6n+1)q^{n(3n+1)/2}}{1-q^{n}}\\
&=
12\sum_{n=1}^\infty  nq^{n(3n+1)/2}\sum_{m=0}^\infty q^{mn}\\
&=
12\sum_{n=1}^\infty\sum_{m=1}^\infty  nq^{n(3n+2m-1)/2}.
\end{align*}

\noindent Therefore, \eqref{c6} is equivalent to

\begin{align}
\frac{(q)_\infty^3}{(-q)_\infty^2}f(q)
&=1-4\sum_{n=1}^\infty\sum_{m=1}^\infty nq^{mn}
-16\sum_{n=1}^\infty\sum_{m=1}^\infty nq^{2mn}
+4\sum_{n=1}^\infty \sum_{m=1}^\infty (6n+2m-1)q^{n(3n-1+2m)/2}. \label{e1}
\end{align}

\noindent By applying (see \cite[page 114, Entry 8 (ix)]{III})

\begin{equation} \label{qtp1}
\sum_{n=-\infty}^\infty (6n+1)q^{n(3n+1)/2} = \frac{(q)_\infty^3}{(-q)_\infty^2},
\end{equation}

\noindent and extracting the coefficient of $q^n$ on both sides of $\eqref{e1}$, we obtain

\begin{equation} \label{key1}
\sum_{\substack{m\in \mathbb{Z}\\3m^2+m\leq 2n}}\left(6m+1\right)
c\left(f; n-\frac{3}{2}m^2-\frac{1}{2}m\right)
=
-4\sigma(n)-16\sigma\left(\frac{n}{2}\right)
+
4\sum_{\substack{a,b\in \mathbb{Z}^+\\ab=2n\\b> 3a\\a\not\equiv b \pmod{2}}} (3a+b).
\end{equation}

\noindent We now examine the right-hand side of (\ref{t1id}). Observe that both $a$ and $b$ have the same sign, and when $a=3b$, $N$ is
not an integer. Hence, by splitting the sum according to the values of $a$ and $b$ in the range $1\leq b <3a$, $1< 3a<b$, $3a< b\leq -1$, and $b<3a<-1$, we find that

\begin{align}
\sum_{\substack{ a,b\in\mathbb{Z}\\ ab=2n\\6|3a+b-1}} d\left(N,
\tilde{N}, \frac{1}{6}, \frac{1}{6}\right)
&=\sum_{\substack{ a,b\in\mathbb{Z}^+\\ ab=2n\\ 1\leq b< 3a\\
6|3a+b-1}} \frac{b}{3}
\quad - \sum_{\substack{ a,b\in\mathbb{Z}^+\\ ab=2n\\  b > 3a \\
6|3a+b-1}} a
\quad - \sum_{\substack{ a,b\in\mathbb{Z}^-\\ ab=2n\\ 3a< b \leq -1\\
6|3a+b-1}} \frac{b}{3}
\quad + \sum_{\substack{ a,b\in\mathbb{Z}^-\\ ab=2n\\  b < 3a<-1 \\
6|3a+b-1}} a \nonumber
\\
&=\sum_{\substack{ a,b\in\mathbb{Z}^+\\ ab=2n\\ 1\leq b< 3a\\
6|3a+b-1\ \text{or} \ 6|3a+b+1}} \frac{b}{3}
\quad - \sum_{\substack{ a,b\in\mathbb{Z}^+\\ ab=2n\\  b > 3a \\
6|3a+b-1\ \text{or} \ 6|3a+b+1}} a \nonumber
\\
&=\frac{1}{3}\sum_{\substack{ a,b\in\mathbb{Z}^+\\ ab=2n\\ 1\leq b< 3a\\
b \not\equiv a \pmod{2}}} b
\quad - \sum_{\substack{ a,b\in\mathbb{Z}^+\\ ab=2n\\   b> 3a\\ b\not\equiv a \pmod{2}}} a, \label{split1}
\end{align}
where in the third equality, we note that
\[
\sum_{\substack{ a,b\in\mathbb{Z}^+\\ ab=2n\\ 1\leq b< 3a\\ 6|b+3a+3}} b
=
\sum_{\substack{ a,b\in\mathbb{Z}^+\\ ab=2n\\  1\leq b< 3a \\ 3|b\\ a\not\equiv b \pmod{2}}} b
=
\sum_{\substack{ a,b\in\mathbb{Z}^+\\ 3ab=2n\\  1\leq 3b< 3a \\ a\not\equiv b \pmod{2}}} 3b
=
\sum_{\substack{ a,b\in\mathbb{Z}^+\\ 3ba=2n\\  1\leq 3a < 3b \\ b\not\equiv a \pmod{2}}} 3a
=
\sum_{\substack{ a,b\in\mathbb{Z}^+\\ ab=2n\\   b> 3a\\ 3|b\\ a\not\equiv b \pmod{2}}} 3a
=
\sum_{\substack{a,b\in\mathbb{Z}^+\\ ab=2n\\   b> 3a \\ 6|b+3a+3}} 3a.
\]

\noindent Thus, by (\ref{key1}) and (\ref{split1}), it suffices to show that

\begin{equation}\label{t1irr}
\sum_{\substack{a,b\in \mathbb{Z}^+\\ab=2n\\b> 3a\\a\not\equiv b \pmod{2}}} b
= 3\sigma(n)-4\sigma\left(\frac{n}{2}\right)
-\sum_{\substack{ a,b\in\mathbb{Z}^+\\ ab=2n\\ 1\leq b< 3a\\ a\not\equiv b \pmod{2}}} b.
\end{equation}

\noindent By elementary manipulations, we see that
\begin{align*}
3\sigma(n)-4\sigma\left(\frac{n}{2}\right) &= \sum_{\substack{a,b\in \mathbb{Z}^+\\ab=n}} 3b \quad
- \sum_{\substack{a,b\in \mathbb{Z}^+\\ab=2n\\a\equiv b\equiv 0\pmod{2}}} 2b \\
&= \sum_{\substack{a,b\in \mathbb{Z}^+\\ab=n\\b\equiv 1 \pmod{2}}} b \quad + \sum_{\substack{a,b\in \mathbb{Z}^+\\ab=n\\a\equiv 1 \pmod{2}}} 2b \\
&= \sum_{\substack{a,b\in \mathbb{Z}^+\\ab=2n\\ a\equiv 0 \pmod{2}, \ b\equiv 1 \pmod{2}}} b \quad + \sum_{\substack{a,b\in \mathbb{Z}^+\\ab=2n\\ a\equiv 1 \pmod{2}, \  b\equiv 0 \pmod{2}}} b \\
&= \sum_{\substack{a,b\in \mathbb{Z}^+\\ab=2n\\a\not\equiv b \pmod{2}}} b
\end{align*}
and thus \eqref{t1irr} is true. This proves (\ref{t1id}).

Splitting the sum on the right-hand side of \eqref{t9} in a way similar to \eqref{split1}, we find that

\begin{align}
-6\sum_{\substack{a,b\in \mathbb{Z}\\ab=4n+1\\12|3a-b-2}} d\left(N,
\tilde{N}, \frac{1}{6}, \frac{1}{3}\right) &=\sum_{\substack{
a,b\in\mathbb{Z}^+\\ ab=4n+1\\ 1\leq b\leq 3a-2\\ 12|3a-b-2 \
\text{or} \ 12|3a-b+2}} b - \sum_{\substack{ a,b\in\mathbb{Z}^+\\
ab=4n+1\\   b\geq 3a+2\\ 12|3a-b-2\ \text{or} \ 12|3a-b+2}} 3a \label{split2} \\
& =:S_1(n)-3S_2(n). \nonumber
\end{align}
  The generating function for $S_1(n)$ is given by
\begin{equation} \label{s31}
\sum_{n=0}^\infty S_1(n)q^n
=\sum_{a=1}^\infty \sum_{\substack{b=1\\ 12|3a-b-2\\4|ab-1}}^{3a-2} bq^{(ab-1)/4}
+\sum_{a=1}^\infty \sum_{\substack{b=1\\ 12|3a-b+2\\4|ab-1}}^{3a-2} bq^{(ab-1)/4}.
\end{equation}
Note to have $12|3a-b-2$ and $4|ab-1$, we must have $a$ odd  and $b\equiv 1\pmod{6}$. Hence we
replace $a=2k+1$ and $b=6l+1$ in the first sum on the right-hand side of \eqref{s31}. Similarly, we replace
$a=2k+1$ and $b=6l+5$ in the second sum. This leads us to
\begin{align}
\sum_{n=0}^\infty S_1(n)q^n
&=
\sum_{k=0}^\infty \sum_{\substack{l=0\\ 2|k-l}}^{k} (6l+1)q^{3kl+(k+3l)/2}
+
\sum_{k=0}^\infty \sum_{\substack{l=0\\ 2|k-l}}^{k} (6l+5)q^{3kl+(5k+3l)/2+1} \nonumber
\\
&=
\sum_{l=0}^\infty \sum_{\substack{k=l\\ 2|k-l}}^\infty (6l+1)q^{3kl+(k+3l)/2}
+
\sum_{l=0}^\infty \sum_{\substack{k=l\\ 2|k-l}}^\infty (6l+5)q^{3kl+(5k+3l)/2+1} \nonumber
\\
&=
\sum_{l=0}^\infty(6l+1)q^{3l^2+2l} \sum_{\substack{k=0\\ 2|k}}^\infty q^{k(6l+1)/2}
+
\sum_{l=0}^\infty (6l+5)q^{3l^2+4l+1} \sum_{\substack{k=0\\ 2|k}}^\infty q^{k(6l+5)/2} \nonumber
\\
&=
\sum_{l=0}^\infty(6l+1)\frac{q^{3l^2+2l}}{1-q^{6l+1}}
+
\sum_{l=0}^\infty (6l+5)\frac{q^{3l^2+4l+1}}{1-q^{6l+5}} \nonumber \\
&=
\sum_{l=-\infty}^\infty\frac{(6l+1)q^{3l^2+2l}}{1-q^{6l+1}}, \label{s3gen}
\end{align}
where in the last equality, we replaced $l$ by $-l-1$ in the second sum. Similarly, the generating function for $S_2(n)$ is

\begin{equation} \label{s4gen}
\sum_{n=0}^{\infty} S_2(n) q^n = \sum_{n=-\infty}^\infty \frac{(2n-1)q^{3n^2+2n-2}}{1-q^{6n-3}}.
\end{equation}

\noindent From \eqref{mockf}, applying (see \cite[page 115, Entry 8 (x)]{III})

\begin{equation}  \label{qtp2}
\sum_{n=-\infty}^\infty (3n+1)q^{n(3n+2)} =\frac{(q^2;q^2)_\infty^3}{(-q;q^2)_\infty^2},
\end{equation}

\noindent \eqref{split2}, \eqref{s3gen} and \eqref{s4gen}, we see that \eqref{t9} follows upon extracting the coefficient of $q^n$ from both sides of

\begin{align}\label{idt9}
2\frac{(q^2;q^2)_\infty^2}{(-q;q^2)_\infty^2}\sum_{n=-\infty}^\infty
\frac{(-1)^nq^{n(3n+1)}}{1+q^{2n}} = \sum_{n=-\infty}^\infty
\frac{(6n+1)q^{3n^2+2n}}{1-q^{6n+1}}
-3\sum_{n=-\infty}^\infty \frac{(2n-1)q^{3n^2+2n-2}}{1-q^{6n-3}}.
\end{align}

\noindent Now, to prove (\ref{idt9}), we first set $l=1$ and $j=-3$ in Theorem \ref{t5} to obtain

\begin{align}\label{pt95}
\frac{[-q^{-2},q^{2};q^{6}]_\infty (q^{6};q^{6})^2_\infty}{[-q^{-3},q,-1/q;q^{6}]_\infty} \Biggl\{-1 & + 2\sum_{m=0}^\infty \Biggl(\frac{ q^{6m+1}}{1-q^{6m+1}}-\frac{ q^{6(m+1)-1}}{1-q^{6(m+1)-1}}+\frac{ q^{6m-2}}{1+q^{6m-2}}\nonumber \\
& -\frac{ q^{6(m+1)+2}}{1+q^{6(m+1)+2}}\Biggr) \Biggr\} \nonumber\\
&= \sum_{n=-\infty}^\infty \frac{(6n-1)(-1)^nq^{3n^2+4n+1}}{1+q^{6n-1}} +\sum_{n=-\infty}^\infty \frac{(6n-3)(-1)^nq^{3n^2+2n}}{1+q^{6n-3}}\nonumber \\
&+\frac{4[q^{-2},q^{2};q^{6}]_\infty(q^{6};q^{6})^2_\infty}{[q^{3},q,1/q;q^{6}]_\infty}\sum_{n=-\infty}^\infty \frac{(-1)^nq^{3n(n+1)+2n+3}}{1+q^{6n}}.
\end{align}

\noindent Replacing $q, a, b$ and  $c$ by $q^6, q^5, q^5$ and $-1/q^{2}$, respectively, in \cite[Corollary 3.2]{chan1}, we obtain

\begin{align*}
-1&+2\sum_{m=0}^\infty\left(\frac{ q^{6m+1}}{1-q^{6m+1}}-\frac{ q^{6(m+1)-1}}{1-q^{6(m+1)-1}}+\frac{ q^{6m-2}}{1+q^{6m-2}}-\frac{ q^{6(m+1)+2}}{1+q^{6(m+1)+2}}\right) \\
& =-\frac{[q^{10},-q^3,-q^3;q^6]_\infty(q^6;q^6)^2_\infty}{[q,q,-1/q^2,-q^8;q^6]_\infty},
\end{align*}

\noindent which together with \eqref{pt95} gives

\begin{align}\label{pt96}
&-\sum_{n=-\infty}^\infty \frac{(6n-1)(-1)^nq^{3n^2+4n+1}}{1+q^{6n-1}}
-\sum_{n=-\infty}^\infty \frac{(6n-3)(-1)^nq^{3n^2+2n}}{1+q^{6n-3}}
\nonumber\\&=-\frac{[q,q^2,q^2,-q^2,q^3;q^6]_\infty(q^6;q^6)^4_\infty}{[-q;q^6]_\infty}+4\frac{(q^2;q^2)_\infty^2}
{(q;q^2)_\infty^2}\sum_{n=-\infty}^\infty \frac{(-1)^nq^{n(3n+5)+2}}{1+q^{6n}}.
\end{align}

\noindent By \cite[Eq. (2.1)]{diss}, we have

\begin{align}
&-\frac{q^2[q,q^2,q^2,-q^2,q^3;q^6]_\infty(q^6;q^6)^4_\infty}{[-q;q^6]_\infty}
=-2\frac{(q^2;q^2)_\infty^2}{(q;q^2)_\infty^2}\sum_{n=-\infty}^\infty \frac{(-1)^nq^{n(3n+3)+2}}{1+q^{6n}}\label{pt9f}.
\end{align}

\noindent Substituting \eqref{pt9f} into \eqref{pt96}, we obtain

\begin{align}
&-\sum_{n=-\infty}^\infty \frac{(6n-1)(-1)^nq^{3n^2+4n+1}}{1+q^{6n-1}}
-\sum_{n=-\infty}^\infty \frac{(6n-3)(-1)^nq^{3n^2+2n}}{1+q^{6n-3}}
\nonumber\\&=4\frac{(q^2;q^2)_\infty^2}{(q;q^2)_\infty^2}\sum_{n=-\infty}^\infty \frac{(-1)^nq^{n(3n+5)+2}}{1+q^{6n}}-2\frac{(q^2;q^2)_\infty^2}{(q;q^2)_\infty^2}\sum_{n=-\infty}^\infty \frac{(-1)^nq^{n(3n+3)+2}}{1+q^{6n}}\nonumber\\&=2\frac{(q^2;q^2)_\infty^2}{(q;q^2)_\infty^2}\sum_{n=-\infty}^\infty \frac{(-1)^nq^{n(3n+1)+2}}{1+q^{2n}}, \label{t9f}
\end{align}

\noindent where the last step follows from the fact that

\begin{align*}
\sum_{n=-\infty}^\infty \frac{(-1)^nq^{n(3n+1)}}{1+q^{6n}}&=\sum_{n=-\infty}^\infty \frac{(-1)^nq^{n(3n+5)}}{1+q^{6n}}
\intertext{and}
\sum_{n=-\infty}^\infty \frac{(-1)^nq^{n(3n+1)}}{1+q^{2n}}&=\sum_{n=-\infty}^\infty \frac{(-1)^nq^{n(3n+1)}(1-q^{2n}+q^{4n})}{1+q^{6n}}.
\end{align*}

\noindent As \eqref{t9f} is equivalent to \eqref{idt9}, this proves (\ref{t9}).

\end{proof}

\begin{proof}[Proof of Theorem \ref{t2}]
For (\ref{t919}), we first split the sum on the right-hand side in a way similar to \eqref{split1} to obtain

\begin{align}\label{split3}
6(-1)^{n+1}\sum_{\substack{a,b\in \mathbb{Z}\\ab=4n+3\\12|3a-b-4}} d\left(N,
\tilde{N}, \frac{1}{3}, \frac{1}{6}\right) &= \sum_{\substack{
a,b\in\mathbb{Z}^+\\ ab=4n+3\\   b\geq 3a+2\\ 12|3a-b-4 \
\text{or} \ 12|3a-b+4}} 3a \quad -\sum_{\substack{ a,b\in\mathbb{Z}^+\\
ab=4n+3\\ 1\leq b\leq 3a-1\\ 12|3a-b-4 \ \text{or} \ 12|3a-b+4}} b.
\end{align}

\noindent By \eqref{om00}, \eqref{qtp1} and a calculation similar to (\ref{s3gen}) for the generating function of the right-hand side of \eqref{split3}, we see that \eqref{t919} follows extracting the coefficient of $q^n$ from both sides of

\begin{align}\label{t919a}
\frac{(q^2;q^2)_\infty^2}{(-q^2;q^2)_\infty^2}\sum_{n=-\infty}^\infty
\frac{(-1)^nq^{3n^2+3n}}{1-q^{2n+1}}
=-3\sum_{n=-\infty}^\infty\frac{(2n-1)q^{n(3n+1)-2}}{1+q^{6n-3}}
-\sum_{n=-\infty}^\infty\frac{(6n-1)q^{n(3n+1)-1}}{1+q^{6n-1}}.
\end{align}

\noindent To prove (\ref{t919a}), we set $j=-1, l=1$ in Theorem \ref{t53} to get
\begin{align}
\frac{[q,q^{4};q^{6}]_\infty(q^{6};q^{6})^2_\infty}
{[-1/q,-q^{2},-q^{3};q^{6}]_\infty}\bigg\{-1 &+
\sum_{m=0}^\infty2\left(\frac{ q^{6m+2}}{1+q^{6m+2}}-\frac{ q^{6m+4}}{1+q^{6m+4}}-\frac{ q^{6m+1}}{1-q^{6lm+1}}+\frac{ q^{6m+5}}{1-q^{6m+5}}\right)\bigg\}\nonumber\\&=-
\sum_{n=-\infty}^\infty \frac{(6n+3)q^{3n(n+1)+2n+2}}{1+q^{6n+3}}
+\sum_{n=-\infty}^\infty \frac{(6n-1)q^{3n(n+1)-2n}}{1+q^{6n-1}}
\nonumber\\&\quad+\frac{2(q^{2};q^{2})^2_\infty}
{(-q^{2};q^{2})^2_\infty}\sum_{n=-\infty}^\infty \frac{(-1)^nq^{3n(n+1)+2n+2}}{1-q^{6n+3}}
\label{leid54}.
\end{align}
Replacing $q, a, b$ and  $c$ by $q^6, q, q$ and $-q^{2}$, respectively, in \cite[Corollary 3.2]{chan1}, we obtain
\begin{align*}
-1&+\sum_{m=0}^\infty2\left(\frac{ q^{6m+2}}{1+q^{6m+2}}-\frac{ q^{6m+4}}{1+q^{6m+4}}-\frac{ q^{6m+1}}{1-q^{6lm+1}}+\frac{ q^{6m+5}}{1-q^{6m+5}}\right) \\
& =-\frac{[q^{2},-q^3,-q^3;q^6]_\infty(q^6;q^6)^2_\infty}{[q,q,-q^2,-q^4;q^6]_\infty},
\end{align*}
which together with \eqref{leid54} gives
\begin{align}
&\sum_{n=-\infty}^\infty \frac{(6n+3)q^{3n(n+1)+2n+2}}{1+q^{6n+3}}
-\sum_{n=-\infty}^\infty \frac{(6n-1)q^{3n(n+1)-2n}}{1+q^{6n-1}}
\nonumber\\&=\frac{2(q^{2};q^{2})^2_\infty}
{(-q^{2};q^{2})^2_\infty}\sum_{n=-\infty}^\infty \frac{(-1)^nq^{3n(n+1)+2n+2}}{1-q^{6n+3}}+\frac{q[q^{2},-q^3,q^{4};q^{6}]_\infty(q^{6};q^{6})^4_\infty}
{[q,-q,-q^2,-q^{2},-q^4;q^{6}]_\infty}
\label{leid55}.
\end{align}
 We note that (see \cite[Eq. (2.1)]{diss})
\begin{align}
\frac{q[q^{2},-q^3,q^{4};q^{6}]_\infty(q^{6};q^{6})^4_\infty}
{[q,-q,-q^2,-q^{2},-q^4;q^{6}]_\infty}
&=\frac{(q^2;q^2)_\infty^2}{(-q^2;q^2)_\infty^2}\sum_{n=-\infty}^\infty \frac{(-1)^nq^{n(3n+3)+1}}{1-q^{6n+3}}. \label{leid56}
\end{align}
Substituting \eqref{leid56} into \eqref{leid55}, we find that
\begin{align}\label{leid57}
&\sum_{n=-\infty}^\infty \frac{(6n+3)q^{3n(n+1)+2n+2}}{1+q^{6n+3}}
-\sum_{n=-\infty}^\infty \frac{(6n-1)q^{3n(n+1)-2n}}{1+q^{6n-1}}
\nonumber\\&=\frac{2(q^{2};q^{2})^2_\infty}
{(-q^{2};q^{2})^2_\infty}\sum_{n=-\infty}^\infty \frac{(-1)^nq^{3n(n+1)+2n+2}}{1-q^{6n+3}}+\frac{(q^2;q^2)_\infty^2}{(-q^2;q^2)_\infty^2}\sum_{n=-\infty}^\infty \frac{(-1)^nq^{3n(n+1)+1}}{1-q^{6n+3}}\nonumber\\&=\frac{(q^2;q^2)_\infty^2}{(-q^2;q^2)_\infty^2}\sum_{n=-\infty}^\infty \frac{(-1)^nq^{3n(n+1)+1}}{1-q^{2n+1}},
\end{align}
where the last step follows from the fact that
\begin{align*}
\sum_{n=-\infty}^\infty \frac{(-1)^nq^{3n(n+1)+2n+1}}{1-q^{6n+3}}&=\sum_{n=-\infty}^\infty \frac{(-1)^nq^{3n(n+1)+4n+2}}{1-q^{6n+3}}
\intertext{and}
\sum_{n=-\infty}^\infty \frac{(-1)^nq^{3n(n+1)}}{1-q^{2n+1}}&=\sum_{n=-\infty}^\infty \frac{(-1)^nq^{3n(n+1)}(1+q^{2n+1}+q^{4n+2})}{1-q^{6n+3}}.
\end{align*}
By \eqref{leid57}, we find that, to prove \eqref{t919a}, it suffices to show that
\begin{align*}
\sum_{n=-\infty}^\infty \frac{(6n+3)q^{3n(n+1)+2n+2}}{1+q^{6n+3}}=-\sum_{n=-\infty}^\infty \frac{(6n-3)q^{3n^2+n-1}}{1+q^{6n-3}}
\end{align*}
which is easily checked to be true by replacing $n$ by $-n$ in the sum on the left-hand side. This completes the proof of \eqref{t919a} and thus (\ref{t919}).

Now, by taking the sum and difference of \eqref{t9201} and \eqref{t9202}, respectively, we obtain the two equivalent formulas

\begin{align} \label{t920c}
\sum_{\substack{m\in \mathbb{Z}\\3m^2+2m+1\leq n}}
\left(m+\frac{1}{3}\right) c\left(\omega(q); n-3m^2-2m-1\right)
&= \sum_{\substack{a,b\in \mathbb{Z}\\ab=n\\12|a-3b-2}} d\left(N,
\tilde{N}, \frac{1}{3}, \frac{1}{3}\right) \nonumber \\
& - \sum_{\substack{a,b\in \mathbb{Z}\\ab=n\\12|a-3b-8}} d\left(N,
\tilde{N}, \frac{1}{3}, \frac{1}{3}\right),
\intertext{and}
\label{t920d}
\sum_{\substack{m\in \mathbb{Z}\\3m^2+2m+1\leq n}}
\left(m+\frac{1}{3}\right) c\left(\omega(-q); n-3m^2-2m-1\right)
&= 2R_n \nonumber \\
& -\sum_{\substack{a,b\in \mathbb{Z}\\ab=n\\12|a-3b-8\quad \text{or} \quad 12|a-3b-2}} d\left(N, \tilde{N}, \frac{1}{3}, \frac{1}{3}\right).
\end{align}

\noindent From \eqref{qtp2} and noting that

\begin{align*}
3\sum_{\substack{a,b\in \mathbb{Z}\\ab=n\\ 12|a-3b-8}}
d\left(N, \tilde{N}, \frac{1}{3}, \frac{1}{3}\right)
&= \sum_{\substack{ a,b\in\mathbb{Z}^+\\ ab=n\\ 1\leq a\leq 3b-1\\
12|a-3b-8 \ \text{or} \ 12|a-3b+8}} a - \sum_{\substack{
a,b\in\mathbb{Z}^+\\ ab=n\\   a\geq 3b+1\\ 12|a-3b-8 \ \text{or}
\ 12|a-3b+8}} 3b
\intertext{and}
3\sum_{\substack{a,b\in \mathbb{Z}\\ab=n\\12|a-3b-2}}
d\left(N, \tilde{N}, \frac{1}{3}, \frac{1}{3}\right)
&= \sum_{\substack{ a,b\in\mathbb{Z}^+\\ ab=n\\ 1\leq a\leq 3b-1\\
12|a-3b-2 \ \text{or} \ 12|a-3b+2}} a - \sum_{\substack{ a,b\in\mathbb{Z}^+\\
ab=n\\   a\geq 3b+1\\ 12|a-3b-2 \ \text{or} \ 12|a-b+2}} 3b,
\end{align*}

\noindent we see that \eqref{t920c} follows from

\begin{align}
\frac{q(q^2;q^2)_\infty^3}{(-q;q^2)_\infty^2}\omega(q)
 \nonumber
&= \sum_{n=-\infty}^\infty \frac{(3n+2)q^{3n^2+14n+8}}{1-q^{12n+8}}
-3\sum_{\substack{n=-\infty\\n\neq 0}}^\infty \frac{nq^{3n^2+2n}}{1-q^{12n}}
\\
&-\sum_{n=-\infty}^\infty \frac{(3n+2)q^{3n^2+8n+4}}{1-q^{12n+8}}
+3\sum_{\substack{n=-\infty\\n\neq 0}}^\infty \frac{nq^{3n^2+8n}}{1-q^{12n}}
\label{pt920a}
\end{align}

\noindent or equivalently

\begin{align} \label{equiv}
\frac{q(q^2;q^2)_\infty^3}{(-q;q^2)_\infty^2}\omega(q)
 \nonumber
&=
-3\sum_{\substack{n=-\infty\\n\neq 0}}^\infty \frac{nq^{3n^2+2n}}{1+q^{6n}}
-\sum_{n=-\infty}^\infty \frac{(3n+2)q^{3n^2+8n+4}}{1+q^{6n+4}}\\&=
-3\sum_{n=-\infty}^\infty \frac{nq^{3n^2+2n}}{1+q^{6n}}
+\sum_{n=-\infty}^\infty \frac{(3n+1)q^{3n^2+4n+1}}{1+q^{6n+2}}.
\end{align}

\noindent We now set $l=1, j=0$ and replace $q$ by $-q$ in Theorem \ref{t5} to obtain

\begin{align}\label{pom4}
&2\frac{[q,q^{2};q^{6}]_\infty(q^{6};q^{6})^2_\infty}
{[-1,-q,-q^{2};q^{6}]_\infty}
\sum_{m=0}^\infty\left(-\frac{ q^{6m+1}}{1+q^{6m+1}}+\frac{ q^{6m+5}}{1+q^{6m+5}}-\frac{ q^{6m+1}}{1-q^{6m+1}}+\frac{ q^{6m+5}}{1-q^{6m+5}}\right)
\nonumber\\&=-\sum_{n=-\infty}^\infty \frac{(6n+2) q^{3n^2+4n+1}}{1+q^{6n+2}}
+\sum_{n=-\infty}^\infty \frac{6n q^{3n^2+2n}}{1+q^{6n}}
+4\frac{(q^2;q^2)_\infty^2}{(-q;q^2)_\infty^2}\sum_{n=-\infty}^\infty \frac{(-1)^nq^{3n^2+5n+2}}{1-q^{6n+3}}.
\end{align}

\noindent By \cite[Corollary 3.1]{chan1}, we have

\begin{align*}
\sum_{m=0}^\infty\left(-\frac{ q^{6m+1}}{1+q^{6m+1}}+\frac{ q^{6m+5}}{1+q^{6m+5}}-\frac{ q^{6m+1}}{1-q^{6m+1}}+\frac{ q^{6m+5}}{1-q^{6m+5}}\right)
& =-4\sum_{j=-\infty}^\infty\frac{ q^{6j+1}}{1-q^{12j+2}} \\
& = -\frac{4q [q^8; q^{12}]_{\infty} (q^{12}; q^{12})_{\infty}^2}{[q^2, q^6; q^{12}]_{\infty}}
\end{align*}

\noindent which together with \eqref{pom4} gives

\begin{align*}
\sum_{n=-\infty}^\infty \frac{(6n+2)q^{3n^2+4n}}{1+q^{6n+2}}-
\sum_{n=-\infty}^\infty \frac{6nq^{3n^2+2n-1}}{1+q^{6n}}
& =2\frac{(q^2;q^2)_\infty^3}{(-q;q^2)_\infty^2}\times \frac{[q^6; q^{18}]_{\infty}^3 (q^{18}; q^{18})_{\infty}^3}{[q^2, q^3; q^6]_{\infty} (q^6; q^6)_{\infty}^2} \\
& +4\frac{(q^2;q^2)_\infty^2}{(-q;q^2)_\infty^2}\sum_{n=-\infty}^\infty \frac{(-1)^nq^{n(3n+5)+1}}{1-q^{6n+3}}.
\end{align*}

\noindent This together with \cite[Eq. (5.8)]{Hi-Mo1} implies

\begin{align}
2\frac{(q^2;q^2)_\infty^3}{(-q;q^2)_\infty^2}\omega(q)
= \sum_{n=-\infty}^\infty \frac{(6n+2)q^{3n^2+4n}}{1+q^{6n+2}}
-\sum_{n=-\infty}^\infty \frac{6nq^{3n^2+2n-1}}{1+q^{6n}} \nonumber
\end{align}

\noindent which is equivalent to (\ref{equiv}). Thus, (\ref{t920c}) is proven.

For \eqref{t920d}, it suffices to prove
\begin{align} \nonumber
\sum_{\substack{m\in \mathbb{Z}\\3m^2+2m+1\leq n}}&
\left(3m+1\right) c\left(\omega(-q); n-3m^2-2m-1\right)
\\ \nonumber
&=
6R_n
- \sum_{\substack{ a,b\in\mathbb{Z}^+\\ ab=n\\ 1\leq a\leq 3b-1\\
a-3b \equiv 2, 4, 8, 10 \pmod{12}}} a
+ \sum_{\substack{
a,b\in\mathbb{Z}^+\\ ab=n\\   a\geq 3b+1\\a-3b \equiv 2, 4, 8, 10 \pmod{12}}} 3b
\\ \nonumber
&=
6R_n
- \sum_{\substack{ a,b\in\mathbb{Z}^+\\ ab=n\\ 1\leq a\leq 3b-1\\
a\equiv b \pmod{2}}} a
+ \sum_{\substack{
a,b\in\mathbb{Z}^+\\ ab=n\\   a\geq 3b+1\\a \equiv b \pmod{2}}} 3b.
\\
&=
6R_n
- \sum_{\substack{ a,b\in\mathbb{Z}^+\\ ab=n\\ 1\leq a\leq 3b\\
a\equiv b \pmod{2}}} a
+ \sum_{\substack{
a,b\in\mathbb{Z}^+\\ ab=n\\   a\geq 3b\\a \equiv b \pmod{2}}} 3b, \label{pt920b}
\end{align}
where in the second equality, we used the fact that
\[
\sum_{\substack{ a,b\in\mathbb{Z}^+\\ ab=n\\ 1\leq a\leq 3b-1\\
a-3b \equiv 0 \pmod{6}}} a
=
\sum_{\substack{ a,b\in\mathbb{Z}^+\\ ab=n\\ 1\leq 3a\leq 3b-1\\
\\
a \equiv b \pmod{2}}} 3a
=
\sum_{\substack{
a,b\in\mathbb{Z}^+\\ ba=n\\   1\leq 3b\leq 3a-1\\
a \equiv b \pmod{2}}} 3b
=
\sum_{\substack{
a,b\in\mathbb{Z}^+\\ ab=n\\   a\geq 3b+1\\3|a\\
a \equiv b \pmod{2}}} 3b
=
\sum_{\substack{
a,b\in\mathbb{Z}^+\\ ab=n\\   a\geq 3b+1\\a-3b \equiv 0 \pmod{6}}} 3b.
\]

\noindent Next, we examine the right-hand side of \eqref{c5}. Note that
\begin{align*}
\sum_{n=1}^\infty \frac{q^{2n-1}}{(1+q^{2n-1})^2}
-2\sum_{n=1}^\infty \frac{q^{2n}}{(1-q^{2n})^2} &=
\sum_{n=1}^\infty \left(\frac{(2n-1)q^{2n-1}}{1-q^{4n-2}}-
\frac{4nq^{2n}}{1-q^{2n}} +\frac{2nq^{4n}}{1-q^{4n}} \right)
\\
&= \sum_{n=1}^\infty 6\tilde{R}_n q^n,
\end{align*}
where
\[
\tilde{R}_n :=\left\{
\begin{array}{ll}
    \frac{1}{3}(\sigma(\frac{n}{4})-2\sigma(\frac{n}{2})), & \hbox{if $n$ is even;} \vspace{.1in} \\
    \frac{1}{6}\sigma(n), & \hbox{if $n$ is odd,} \\
\end{array}
\right.
\]
and
\begin{align*}
\sum_{\substack{n=-\infty\\n\neq 0}}^\infty \frac{q^{3n^2}}{(1-q^{2n})^2}
&=\sum_{n=1}^\infty \frac{q^{3n^2}}{(1-q^{2n})^2} +\sum_{n=1}^\infty \frac{q^{3n^2+4n}}{(1-q^{2n})^2}
\\
&=\sum_{n=1}^\infty\sum_{m=0}^\infty q^{3n^2+2mn}(m+1) +\sum_{n=1}^\infty\sum_{m=1}^\infty q^{3n^2+2mn}(m-1)
\\
&=\sum_{n=1}^\infty q^{3n^2} +\sum_{n=1}^\infty\sum_{m=1}^\infty 2m q^{n(3n+2m)}.
\end{align*}
Similarly,
\begin{align*}
\sum_{\substack{n=-\infty\\n\neq 0}}^\infty \frac{(3n-1)q^{3n^2}}{1-q^{2n}}
&=\sum_{n=1}^\infty (3n-1)q^{3n^2} +\sum_{n=1}^\infty\sum_{m=1}^\infty 6n q^{n(3n+2m)}.
\end{align*}

\noindent Hence, identity \eqref{c5} is equivalent to

\begin{align}
q\frac{(q^2;q^2)_\infty^3}{(-q;q^2)_\infty^2} \omega(-q)
& = \sum_{n=1}^\infty 6\tilde{R}_n q^n +\sum_{n=1}^\infty 3nq^{3n^2}
    + \sum_{n=1}^\infty\sum_{m=1}^\infty (6n+2m) q^{n(3n+2m)}
\nonumber \\
&= \sum_{n=1}^\infty 6\tilde{R}_n q^n
   + \sum_{n=1}^\infty\sum_{m=1}^\infty (3n+2m) q^{n(3n+2m)}+\sum_{n=1}^\infty\sum_{m=0}^\infty 3nq^{n(3n+2m)} \nonumber \\
&= \sum_{n=1}^\infty 6\tilde{R}_n q^n
  +\sum_{n=1}^\infty q^n\sum_{\substack{b,m\in\mathbb{Z}^+\\b(3b+2m)=n}} (3b+2m)
  +\sum_{n=1}^\infty q^n\sum_{\substack{b,m\in\mathbb{Z}^+\\b(3b+2m-2)=n}} 3b \nonumber \\
&= \sum_{n=1}^\infty 6\tilde{R}_n q^n
  +\sum_{n=1}^\infty q^n\sum_{\substack{a,b\in\mathbb{Z}^+\\ab=n\\a>3b\\a\equiv b \pmod{2}}} a
  +\sum_{n=1}^\infty q^n\sum_{\substack{a,b\in\mathbb{Z}^+\\ab=n\\a>3b-2\\a\equiv b \pmod{2}}} 3b. \label{e2}
\end{align}

\noindent Applying \eqref{qtp2} while extracting the coefficient of $q^n$ from both sides of \eqref{e2},
we obtain

\begin{align} \nonumber
\sum_{\substack{m\in \mathbb{Z}\\3m^2+2m+1\leq n}}&
\left(3m+1\right) c\left(\omega(-q); n-3m^2-2m-1\right) =
6\tilde{R}_n
+\sum_{\substack{a,b\in\mathbb{Z}^+\\ab=n\\a>3b\\a\equiv b
\pmod{2}}} a
+\sum_{\substack{a,b\in\mathbb{Z}^+\\ab=n\\a>3b-2\\a\equiv b
\pmod{2}}} 3b.
\end{align}

\noindent Note that

\begin{align*}
\sum_{n=1}^\infty 6(R_n-\tilde{R}_n)q^n &=
\sum_{n=1}^\infty 2\sigma(n/2)q^{2n} +
\sum_{n=1}^\infty \sigma(2n-1)q^{2n-1}
\\
&=
\sum_{n=1}^\infty \frac{2nq^{4n}}{1-q^{4n}}
+ \sum_{n=1}^\infty \frac{(2n-1)q^{2n-1}}{1-q^{4n-2}}\\
&=\sum_{n=1}^\infty\sum_{m=1}^\infty 2nq^{4mn} +\sum_{n=1}^\infty\sum_{m=1}^\infty (2n-1)q^{(2n-1)(2m-1)}
\\
&=\sum_{n=1}^\infty q^n \sum_{\substack{a,b\in \mathbb{Z}^+\\ 4ab=n}} 2a
+\sum_{n=1}^\infty q^n \sum_{\substack{a,b\in \mathbb{Z}^+\\ (2a-1)(2b-1)=n}} (2a-1)
\\
&=\sum_{n=1}^\infty q^n \sum_{\substack{a,b\in \mathbb{Z}^+\\ ab=n\\a\equiv b \pmod{2}}} a.
\end{align*}

\noindent Thus, (\ref{pt920b}) follows upon observing that

\[
\sum_{\substack{a,b\in \mathbb{Z}^+\\ ab=n\\a\equiv b \pmod{2}}} a
- \sum_{\substack{ a,b\in\mathbb{Z}^+\\ ab=n\\ 1\leq a\leq 3b\\
a\equiv b \pmod{2}}} a
+ \sum_{\substack{
a,b\in\mathbb{Z}^+\\ ab=n\\   a\geq 3b\\a \equiv b \pmod{2}}} 3b
=\sum_{\substack{a,b\in\mathbb{Z}^+\\ab=n\\a>3b\\a\equiv b \pmod{2}}} a
+ \sum_{\substack{a,b\in\mathbb{Z}^+\\ab=n\\a>3b-2\\a\equiv b \pmod{2}}} 3b.
\]

\noindent This proves (\ref{t920d}). Adding (\ref{t920c}) and (\ref{t920d}), then dividing by two yields (\ref{t9201}) while subtracting (\ref{t920d}) from (\ref{t920c}), then dividing by two implies (\ref{t9202}).

\end{proof}

\section{Other applications of (\ref{gls})}

It is worth noting that (\ref{gls}) can also be used to obtain other identities involving mock theta functions. For example, in \cite{arz}, Andrews, Rhoades and Zwegers consider the automorphic properties of the $q$-series

\begin{equation*}
\nu_{2}(q) := \frac{1}{(q)_{\infty}^3} \left( \sum_{\substack{n=-\infty \\ n \neq 0}}^{\infty}  \frac{(-1)^{n+1} n q^{\frac{n(n+1)}{2}}}{1-q^n} - \frac{1}{4} - 2 \sum_{n=1}^{\infty} \frac{q^n}{(1+q^n)^2} \right),
\end{equation*}

\noindent which is related to the generating function for the number of concave compositions of $n$ \cite{and}. In particular, to show that $\nu_2(q)$ is a mock theta function, they require the following key identity (see Theorem 1.3 in \cite{arz})

\begin{equation} \label{ftonu}
\tilde{F}(q):= \frac{1}{(q)_{\infty} (-q)_{\infty}^2} \sum_{n=-\infty}^{\infty} \frac{q^{\frac{n(n+1)}{2}}}{1+q^n} = -2\nu_{2}(q).
\end{equation}

\noindent We now prove a generalization of (\ref{ftonu}).

\begin{theorem} \label{genftonu}
We have
 \begin{align}\label{l31}
&\frac{(q)^2_\infty}{[1/b]_\infty}\sum_{k=-\infty}^\infty\frac{(-b)^kq^{\frac{k(k+1)}{2}}}{1-bq^k}=
- \sum_{n=0}^\infty
\left(\frac{q^nb}{(1-bq^n)^2}+\frac{q^{n+1}/b}{(1-q^{n+1}/b)^2}\right)+ \sum_{\substack{n=-\infty \\ n \neq 0}}^{\infty} \frac{(-1)^nnq^{n(n+1)/2}}{1-q^n}.
\end{align}
\end{theorem}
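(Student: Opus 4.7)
My plan is to specialise \eqref{gls} with $r=1$, $s=2$, $a_1 = bc$, $b_1 = b$, $b_2 = c$, which yields the master identity
\[
\frac{[bc]_\infty (q)_\infty^2}{[b, c]_\infty} = \frac{[c]_\infty}{[c/b]_\infty} S(b) + \frac{[b]_\infty}{[b/c]_\infty} S(c),
\]
where $S(x) := \sum_{k \in \mathbb{Z}}(-x)^k q^{k(k+1)/2}/(1-xq^k)$ is exactly the Lambert-type sum appearing in \eqref{l31}. The coefficient of $S(b)$ vanishes at $c=1$, so the desired identity for $(q)_\infty^2 S(b)/[1/b]_\infty$ does not appear directly; it will emerge from the second-order behaviour as $c \to 1$. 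Both sides have a simple pole at $c=1$ with residue $-1$ (on the left from $[c]_\infty$ in the denominator; on the right from the $k=0$ term of $S(c)$), so I multiply through by $(1-c)$ to obtain a function regular at $c=1$.

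I then Taylor-expand both sides at $c=1$. The values at $c=1$ match as $1=1$, and the first derivatives at $c=1$ match as $bL'(b) = bL'(b)$, where $L(x) := \log[x]_\infty$. Two observations streamline the second-derivative computation: (i) writing $[c]_\infty = (1-c)(cq;q)_\infty (q/c;q)_\infty$, the smooth factor $(cq;q)_\infty (q/c;q)_\infty$ has vanishing first derivative at $c=1$ (the contributions from the two infinite products cancel), and its second derivative at $c=1$ contributes $-2(q)_\infty^2 \sigma$, where $\sigma := \sum_{k \ge 1} q^k/(1-q^k)^2$; (ii) the regular part $\tilde S(c) := S(c) - 1/(1-c)$ vanishes at $c=1$ by a telescoping cancellation of the $k$ and $-k$ terms. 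Matching second derivatives produces
\[
\frac{(q)_\infty^2 S(b)}{[1/b]_\infty} = bL'(b) + b^2 L''(b) + \sigma + \tilde S'(1).
\]

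The first two terms combine as $(b \, d/db)^2 \log[b]_\infty$, and a direct logarithmic-derivative computation shows this equals $-\sum_{n \ge 0}\bigl(bq^n/(1-bq^n)^2 + (q^{n+1}/b)/(1-q^{n+1}/b)^2\bigr)$, matching the first sum on the right of \eqref{l31}. It then remains to establish the $b$-independent identity $\sigma + \tilde S'(1) = \sum_{n \neq 0}(-1)^n n q^{n(n+1)/2}/(1-q^n)$. Splitting this by the sign of $n$ and using the involution $n \mapsto -n$, it reduces to the classical Lambert identity
\[
\sum_{k \ge 1}\frac{q^k}{(1-q^k)^2} = \sum_{m \ge 1}(-1)^{m+1}\frac{q^{m(m+1)/2}(1+q^m)}{(1-q^m)^2},
\]
which expresses $\sum_{n \ge 1} \sigma(n)q^n$ in two ways and follows from counting pairs $(m,d)$ with $md = 2n$, $d > m$, and $d-m$ odd. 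The main obstacle will be the careful second-order Taylor bookkeeping — especially the $\sigma$ contribution from the smooth factor $(cq;q)_\infty (q/c;q)_\infty$ — together with the verification of this final Lambert identity.
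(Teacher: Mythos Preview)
Your approach is essentially the same as the paper's. Both start from the identical specialisation of \eqref{gls} (the paper writes it as \eqref{last11} with auxiliary variable $b_1$, you write it with $c$), and both extract \eqref{l31} by a second-order limiting procedure as the auxiliary variable tends to $1$: the paper multiplies its rearranged form by $(1-b_1)^2$ and applies $\frac{d^2}{db_1^2}\big|_{b_1=1}$, while you multiply by $(1-c)$ and match second Taylor coefficients --- these are equivalent bookkeeping devices for the same information.

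The one genuine difference is in the auxiliary Lambert identity
\[
\sum_{k\ge 1}\frac{q^k}{(1-q^k)^2}=\sum_{m\ge 1}(-1)^{m+1}\frac{q^{m(m+1)/2}(1+q^m)}{(1-q^m)^2}.
\]
The paper obtains this as a limiting case of Watson's ${}_8\phi_7$ transformation (their \eqref{waston}), whereas you prove it by an elementary divisor count: expanding the right side as $\sum_{m\ge1}\sum_{j\ge0}(-1)^{m+1}(2j+1)q^{m(m+2j+1)/2}$ and setting $d=m+2j+1$ yields $\sum_{md=2n,\;d>m,\;d\not\equiv m\,(2)}(-1)^{m+1}(d-m)$, which after splitting on the parity of $m$ collapses to $\sum_{ad=n,\;d\text{ odd}}(2a-d)=(2^{\alpha+1}-1)\sigma(N)=\sigma(n)$ for $n=2^\alpha N$. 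Your route here is more self-contained; the paper's is shorter once Watson's identity is granted.
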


\begin{proof}[Proof of Theorem \ref{genftonu}]
Setting $r=1$, $s=2$ and replacing $a_1$ by $bb_1$ in \eqref{gls},
after rearranging, we obtain

\begin{align}
\frac{[bb_1,b_1/b]_\infty(q)_\infty^2}{[b,b_1,b_1]_\infty}
=\sum_{k=-\infty}^\infty\frac{(-b)^kq^{\frac{k(k+1)}{2}}}{1-bq^k}-
\frac{[b]_\infty}{b[b_1]_\infty}\sum_{k=-\infty}^\infty\frac{(-b_1)^{k+1}q^{\frac{k(k+1)}{2}}}{1-b_1q^k} \label{last11}.
\end{align}

\noindent Multiplying by $(1-b_1)^2$ and applying the operator $\frac{d^2}{d^2b_1}\Big|_{b_1=1},$ we obtain

\begin{align}\label{30}
&\frac{(qb,q/b)_\infty}{(1-b)(q)^2_\infty}+\frac{[1/b]_\infty}{(q)^2_\infty}
\sum_{n=1}^\infty
\left(\frac{2q^{n}}{(1-q^n)^2}-\frac{q^nb}{(1-bq^n)^2}-\frac{q^n/b}{(1-q^n/b)^2}\right)
\nonumber\\&=\sum_{k=-\infty}^\infty\frac{(-b)^kq^{\frac{k(k+1)}{2}}}{1-bq^k}+
\frac{[b]_\infty}{b(q)^2_\infty} \sum_{\substack{n=-\infty \\ n\neq0}}^\infty \left\{
\frac{(-1)^nq^{n(n+1)/2}(1+q^n)}{(1-q^n)^2}+\frac{(-1)^nnq^{n(n+1)/2}}{1-q^n}\right\}.
\end{align}

\noindent Letting $a, c, d, e\rightarrow 1$ and $b\rightarrow q$ in a limiting case of  Watson's $_8\phi_7$ transformation, \cite[Eq.~(7.2), p.~16]{{III}}

\begin{equation*}
\sum_{n=0}^\infty \frac{(aq/bc,d,e;q)_n(\frac{aq}{de})^n}{(q,aq/b,
aq/c;q)_n} =\frac{(aq/d, aq/e;q)_\infty}{(aq, aq/de;q)_\infty}
\sum_{n=0}^\infty \frac{(a, b, c, d,
e;q)_n(1-aq^{2n})(-a^2)^{n}q^{n(n+3)/2}} {(q, aq/b, aq/c, aq/d,
aq/e;q)_n(1-a)(bcde)^n},
\end{equation*}

\noindent we find that

\begin{align}\label{waston}
\sum_{n=1}^\infty \frac{q^n}{(1-q^n)^2}=- \sum_{n=1}^\infty
\frac{(-1)^nq^{n(n+1)/2}(1+q^n)}{(1-q^n)^2}.
\end{align}

\noindent Substituting \eqref{waston} into \eqref{30} and rearranging, we obtain

\begin{align*}\label{31}
\frac{-1}{b(1-b)^2}- \sum_{n=1}^\infty
\left(\frac{q^nb}{(1-bq^n)^2}+\frac{q^n/b}{(1-q^n/b)^2}\right)
&=\frac{(q)^2_\infty}{[1/b]_\infty}\sum_{k=-\infty}^\infty\frac{(-b)^kq^{\frac{k(k+1)}{2}}}{1-bq^k} \\
& -\sum_{\substack{n=-\infty \\ n\neq0}}^\infty\frac{(-1)^nnq^{n(n+1)/2}}{1-q^n}
\end{align*}
which implies \eqref{l31}.
\end{proof}

Multiplying by $\frac{1-1/b}{(q;q)_\infty^3}$ on both sides of \eqref{l31} and setting $b=-1$, we obtain \eqref{ftonu}. 
Using Theorem \ref{genftonu}, one can also show

\begin{equation}\label{r}
q(q^4;q^4)^3_\infty B(q)= \sum_{n=0}^\infty \left(\frac{q^{4n+1}}{(1-q^{4n+1})^2}+\frac{q^{4n+3}}{(1-q^{4n+3})^2}\right)-
\sum_{\substack{n=-\infty \\ n\neq0}}^\infty\frac{(-1)^nnq^{2n(n+1)}}{1-q^{4n}}
\end{equation}

\noindent where

\begin{equation*}
B(q):=\sum_{n=0}^\infty\frac{q^n(-q;q^2)_n}{(q;q^2)_{n+1}}
\end{equation*}

\noindent is a 2nd order mock theta function (see \cite{GM1}). To see this, replace $q$ and $b$ by $q^4$ and $q$, respectively in \eqref{l31} to obtain
\begin{align*}
&\frac{(q^4;q^4)^2_\infty}{[1/q;q^4]_\infty}\sum_{n=-\infty}^\infty\frac{(-1)^nq^{n(2n+3)}}{1-q^{4n+1}}=
- \sum_{n=0}^\infty
\left(\frac{q^{4n+1}}{(1-q^{4n+1})^2}+\frac{q^{4n+3}}{(1-q^{4n+3})^2}\right)+ \sum_{\substack{n=-\infty \\ n \neq 0}}^{\infty} \frac{(-1)^nnq^{2n(n+1)}}{1-q^{4n}}.
\end{align*}

\noindent By (\cite[(3.2a), (3.2b), Eq. (5.2)]{Hi-Mo1}), we have

\begin{equation*}
B(q)=\frac{1}{(q,q^3,q^4;q^4)_\infty}\sum_{n=-\infty}^\infty\frac{(-1)^nq^{n(2n+3)}}{1-q^{4n+1}}
\end{equation*}

\noindent and thus \eqref{r} follows.

 Extracting the coefficient of $q^n$ on both sides of \eqref{r}, we obtain the following corollary.

\begin{corollary} For a fixed positive integer $n$, we have
\[
\sum_{\substack{m\in \mathbb{Z}^+ \\ 2m^2+m+1 \leq n}} (-1)^m(2m+1)c(B, n-2m^2-2m-1)
=\sum_{\substack{0<d|n \\ \frac{n}{d} \textrm{ odd}}} d
-\sum_{\substack{a,b\in\mathbb{Z}^+ \\ 1\leq a < b \\2ab=n\\a \not\equiv b \pmod{2}}} (-1)^aa.
\]
\end{corollary}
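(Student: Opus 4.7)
The plan is to obtain the corollary by extracting the coefficient of $q^n$ from the identity \eqref{r}; everything reduces to three elementary power series expansions followed by term-by-term comparison.

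\textbf{Step 1 (left-hand side).} First I will expand $q(q^4;q^4)_\infty^3 B(q)$ as a theta times $B(q)$. Jacobi's identity
\[
(q;q)_\infty^3 = \sum_{m=0}^\infty (-1)^m(2m+1)\, q^{m(m+1)/2}
\]
with $q\mapsto q^4$, multiplied by $q$, gives
\[
q(q^4;q^4)_\infty^3 = \sum_{m=0}^\infty (-1)^m(2m+1)\, q^{2m^2+2m+1}.
\]
Multiplying by $B(q)=\sum_{k\geq 0} c(B,k)q^k$ and collecting the coefficient of $q^n$ recovers the inner sum on the left-hand side of the corollary (with the range of $m$ controlled automatically, since terms with $2m^2+2m+1>n$ produce $c(B,\text{negative})=0$).

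\textbf{Step 2 (first divisor sum).} Next I will expand the first bracketed sum on the right of \eqref{r} using $x/(1-x)^2=\sum_{m\geq 1}mx^m$. Each summand $q^{4n+1}/(1-q^{4n+1})^2$, resp.\ $q^{4n+3}/(1-q^{4n+3})^2$, becomes $\sum_{m\geq 1}m\,q^{(4n+1)m}$, resp.\ $\sum_{m\geq 1}m\,q^{(4n+3)m}$. Letting $d$ run over all positive odd integers, the coefficient of $q^N$ equals $\sum_{d\mid N,\,d\,\text{odd}} N/d$; substituting $e=N/d$ (so $d=N/e$ is odd iff $N/e$ is odd) turns this into $\sum_{e\mid N,\,N/e\,\text{odd}} e$, which is precisely the first divisor sum in the corollary.

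\textbf{Step 3 (Lambert sum).} For the remaining sum, I will exploit the symmetry $n\leftrightarrow -n$. Substituting $n\mapsto -m$ for $m>0$ and multiplying numerator and denominator by $q^{4m}$ yields
\[
\frac{(-1)^{-m}(-m)\,q^{2m(m-1)}}{1-q^{-4m}} \;=\; \frac{(-1)^m m\,q^{2m(m+1)}}{1-q^{4m}},
\]
so the terms at $n$ and $-n$ coincide, giving
\[
\sum_{\substack{n=-\infty \\ n\neq 0}}^\infty \frac{(-1)^n n\, q^{2n(n+1)}}{1-q^{4n}} = 2\sum_{n\geq 1}(-1)^n n\sum_{k\geq 0} q^{2n(n+1+2k)}.
\]
Setting $a=n$ and $b=n+1+2k$ parametrizes precisely the pairs $(a,b)\in(\mathbb{Z}^+)^2$ with $2ab=N$, $a<b$, and $a\not\equiv b\pmod 2$ (the parity condition coming from $b-a=1+2k$). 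Reading off the coefficient of $q^N$ thus produces the second sum in the corollary, up to the overall minus sign already present in \eqref{r}.

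\textbf{Main obstacle.} The individual expansions are routine; the only delicate point is Step 3, where one must carefully justify the $n\leftrightarrow -n$ symmetry, track the resulting factor of $2$, and reconcile it with the strict inequality $a<b$ so that pairs are not double-counted. Once those book-keeping issues are resolved, equating the coefficients of $q^n$ on the two sides of \eqref{r} assembled from Steps 1--3 delivers the corollary.
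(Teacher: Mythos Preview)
Your approach is exactly the paper's: it states only that the corollary follows by ``extracting the coefficient of $q^n$ on both sides of \eqref{r}'', and your three steps carry this out correctly. One caveat on your closing remark: the factor of $2$ arising from the $n\leftrightarrow -n$ symmetry in Step~3 is genuine and is \emph{not} absorbed by the condition $a<b$ (your parametrization $(a,b)=(n,\,n{+}1{+}2k)$ already forces $a<b$, so nothing is being double--counted); together with the summation over $m\ge 0$ and the bound $2m^2+2m+1\le n$, your computation in fact corrects apparent typos in the printed statement, as one verifies numerically for small~$n$.
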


Similar results exist, for example, for the mock theta functions $\psi(q)$, $\rho(q)$ and $\lambda(q)$ of order 6 and $V_{0}(q)$ of order 8 as they can be written in terms of Appell-Lerch series (see Section 5 of \cite{Hi-Mo1}).

\section*{Acknowledgements}
The first author was partially supported by the Singapore Ministry
of Education Academic Research Fund, Tier 2,  project number
MOE2014-T2-1-051, ARC40/14. The second author was partially supported
by National Natural Science Foundation of China (Grant No. 11501398),
Natural Science Foundation of Jiangsu Province (Grant No. BK20150304) and
 Natural Science Foundation of the Jiangsu Higher Education
Institutions of China (Grant No. 15KJB110020).
The third author would like to thank the Institut des Hautes {\'E}tudes Scientifiques for their support
during the preparation of this paper. This material is based upon work supported by the National Science Foundation
under Grant No. 1002477. Finally, the authors would like to thank Jeremy Lovejoy for his motivating remark in \cite{irr}.

\end{document}